\newtheorem{theorem}{Theorem}[section]
\newtheorem{corollary}[theorem]{Corollary}
\newtheorem{lemma}[theorem]{Lemma}
\newtheorem{proposition}[theorem]{Proposition}
\theoremstyle{definition}
\newcommand{\norm}[1]{\left\lVert#1\right\rVert}
\DeclareMathOperator{\csch}{csch}
\newcommand{\TheTitle}{Control Problems with Vanishing Lie Bracket Arising from Complete Odd Circulant Evolutionary Games}
\title{\TheTitle}
\author{Christopher Griffin\footnote{Applied Research Laboratory, Penn State University, University Park, PA 16802, E-mail: griffinch@psu.edu} \and James Fan\footnote{Naval Postgraduate School, Monterey, CA 93943, E-mail: james.fan@nps.edu}
}
\begin{document}

\maketitle

\begin{abstract}

 We study an optimal control problem arising from a generalization of rock-paper-scissors in which the number of strategies may be selected from any positive odd number greater than 1 and in which the payoff to the winner is controlled by a control variable $\gamma$. Using the replicator dynamics as the equations of motion, we show that a quasi-linearization of the problem admits a special optimal control form in which explicit dynamics for the controller can be identified. We show that all optimal controls must satisfy a specific second order differential equation parameterized by the number of strategies in the game. We show that as the number of strategies increases, a limiting case admits a closed form for the open-loop optimal control. In performing our analysis we show necessary conditions on an optimal control problem that allow this analytic approach to function.

\end{abstract}

\section{Introduction}

In this paper, we study the frequently occurring phenomenon of cyclic competition in nature \cite{JB75,SL96,KRFB02,GHS03,KR04,KNS05,NHK11}. Cyclic competition in coral reef populations are studied in \cite{JB75}. Sinervo and Lively first characterized rock-paper-scissors like competition in lizards \cite{SL96}, while Gilg, Hanski and Sittler \cite{GHS03} study this behavior in rodents. Cyclic behavior in microbial populations is studied in \cite{KRFB02,KR04,KNS05,NHK11}. In classical and evolutionary game theory, cyclic dominance (e.g., matching pennies, rock-paper-scissors) games are commonly studied  \cite{Mor94,Wei95}. Biologically speaking, in an idealized cyclic game, the absolute fitness measure (payoff) resulting from species interaction can be represented by a circulant matrix, in which row $k$ is a rotation of row $k-1$ for each $k$. Games with circulant payoff matrices have been studied extensively in evolutionary game theory \cite{Ze80,SSW80,HS98,HS03,DG09,HS00,SMJS14,GK16} and provide some of the most interesting behaviors \cite{SMJS14}.

In early work, cyclic interaction is considered without explicit reference to games. Cyclic (chemo-biological) interactions are studied extensively in \cite{SSW78,HSSW80,SSW79} in which both competitive and cooperative behaviors are identified. Analysis of the replicator in which a circulant matrix emerges is studied in \cite{SSW80} as a result of cyclic mass interaction kinetics. Zeeman \cite{Ze80} made an early study of the dynamics of cyclic games showing that in rock-paper-scissors a degenerate Hopf bifurcation leads to the emergence of a non-linear center with no limit cycle possible in any 3 strategy game under the replicator dynamics.  Since this early work, several authors have investigated various cyclic games and games characterized by circulant matrices. Among many other works: Hofbauer and Schlag \cite{HS00} consider imitation in cyclic games; Diekmann and Gils specifically study the cyclic replicator dynamics and focus on the properties of low-dimensional cyclic games \cite{DG09}; Ermentrout et al. consider a transition matrix evolutionary dynamic in which a limit cycle emerges in the rock-paper-scissors game \cite{EGB16}; and Griffin and Belmonte \cite{GB17} study a triple public goods game and show that is is diffeomorphic to generalized rock-paper-scissors. Each of these works focuses explicitly on classes of circulant games, while recent work by Grani{\'{c}} and Kerns \cite{GK16} characterizes the Nash equilibria of arbitrary circulant games, but does not focus on the evolutionary game context. Rock paper scissors has been studied extensively in the literature starting with \cite{ML75}. \cite{CYHP14} studies RPS at the mesoscopic scale, while chaotic behavior in special forms of RPS are studied in \cite{HZTW19,SAF02,SC03,SAC05}. \cite{NHK11} studies the evolution of restraint in a RPS context. \cite{M10} studies the effect of mutation. RPS is used as an exemplar in a non-standard evolutionary dynamic in\cite{GJW20}.

There has also been extensive work on spatial games with circulant payoff matrices. Peltom\"aki and Alvara \cite{PA08} consider both 3 and 4 state rock-paper-scissors. Other papers consider rock-paper-scissors with variations on reaction rate \cite{HMT10} or study the basins of attraction \cite{SWYL10}. DeForest and Belmonte \cite{dB13} study a fitness gradient variation on the spatial replicator and show rock-paper-scissors can exhibit spatial chaos under these dynamics. More recent work by Szczesny et. al \cite{SMR14} considers spiral formations in rock-paper-scissors. Spatial rock-paper-scissors not motivated by the replicator equation is studied extensively in \cite{PR19,SMR14,SMJS14,RMF07,RMF08,HMT10,PR17,SMR13}.

In this paper, we extend work in \cite{GB17} by studying an optimal control problem defined on a $N$-strategy ($N=3,5,7,\dots$) generalization of rock-paper-scissors. Odd cardinality interactions are interesting because they model specific  biological cases \cite{SL96,GHS03,SSW78,HSSW80,SSW79}. Additionally, when $N$ is very large, these have the potential to model systems in which many individuals with a variety of strengths and weaknesses interact. Recent work by \cite{RK20} studies the replicator as emerging from the minimization of an action. This work is related to but distinct from the work studied in this paper.

For this paper, we assume that the payoff matrix is (i) defined by the sum of two circulant matrices, and (ii) one of those circulant matrices admits a single control parameter. Thus we consider the general class of control problems first studied in a specific case in \cite{GB17}. Our payoff matrix is inspired by the generalized rock-paper-scissors matrix defined in \cite{Wei95}. Since every pair of heterogeneous strategic interactions (e.g., rock vs. scissors) results in a non-zero payoff, we refer to this class of games as \textit{complete odd circulant games}.

The main results in this paper are:
\begin{enumerate}

\item We generalize the control problem defined in \cite{GB17} to complete odd circulant games of any order by writing the replicator dynamics as the sum of an uncontrolled component and a controlled component, both of which have circulant Jacobian matrices.

\item We show that a quasi-linearization of the control problem (as done in  \cite{GB17}) has special form admitting a complete characterization of the dynamics of the optimal control. We also derive a sufficient condition on control optimality and thus completely generalize the results in \cite{GB17} to arbitrary complete odd circulant games.

\item As a part of the generalization, we find a second order ordinary differential equation that the optimal control must obey and show that the asymptotic form (as $N$ grows large) of this ODE has a natural closed form solution.
\end{enumerate}

The remainder of this paper is organized as follows: In Section \ref{sec:Prelim} we present preliminary results and notation. In Section \ref{sec:GeneralControl} we introduce the control problem of interest and study a general class of optimal control problems that will assist in the derivation of our main results. Our main results on control of complete odd circulant games are found in Section \ref{sec:CyclicControl}. Conclusions and future directions are presented in Section \ref{sec:Conclusion}. In addition, we provide three appendices. Appendix \ref{sec:OptimalControl} provides essential results from optimal control theory used in this paper. Appendix \ref{sec:TechProofs} provides technical proofs to four lemmas used in the paper. Appendix \ref{app:Examples} contains numerical examples. 

\section{Notation and Preliminary Results}\label{sec:Prelim}
A \textit{circulant matrix} is a square matrix with form:
\begin{displaymath}
\mathbf{A} =
\begin{bmatrix}
a_0 & a_{n-1} & a_{n-2} & \cdots & a_{1}\\
a_1 & a_0 & a_{n-1} & \cdots & a_{2}\\
\vdots & \vdots & \vdots & \ddots & \vdots\\
a_{n-1} & a_{n-2} & a_{n-3} & \cdots & a_0
\end{bmatrix}.
\end{displaymath}
A circulant matrix is entirely characterized by its first row and all other rows are cyclic permutations of this first row. The set of $N \times N$ circulant matrices forms a commutative algebra, a fact that will be used frequently in this paper. Moreover, the eigenvalues of these matrices have special form. If $\mathbf{A}$ is an $N \times N$ circulant matrix and $\omega_0,\dots,\omega_{N-1}$ are the $N^\text{th}$ roots of unity, then eigenvalue $\lambda_j$ ($j=0,\dots,N-1$) is given by the expression:
\begin{displaymath}
\lambda_j = a_0 + a_{n-1}\omega_j + a_{n-2}\omega_j^2 + \cdots + a_{1}\omega_j^{N-1}.
\end{displaymath}
Further details on this class of matrices is available in \cite{D12}.

Let:
\begin{displaymath}
\Delta_N = \left\{\mathbf{u} \in \mathbb{R}^N : \mathbf{1}^T\mathbf{u} = 1, \mathbf{u} \geq \mathbf{0}\right\}
\end{displaymath}
be the unit $N$-simplex embedded in $N$-dimensional Euclidean space. Here $\mathbf{1}$ is an appropriately sized vector of $1$'s and $\mathbf{0}$ is a zero vector. Circulant matrices are a special subclass of Toeplitz matrices and as such inherit all their properties and more.

We consider a family of control problems defined on parameterized cyclic games with $N = 2n + 1$ strategies, where $n = 1,2,\dots$. 
For the remainder of this paper, define $\mathbf{L}_N,\mathbf{M}_N \in \mathbb{R}^{N \times N}$ circulant matrices so that the first row of $\mathbf{L}_N$ is given by:
\begin{equation}
\mathbf{L}_{1\cdot} = \left[{0,-1,1,-1,1,\dots,-1,1}\right]
\end{equation}
and
\begin{equation}
\mathbf{M}_{1\cdot} = \left[{0,0,1,0,1,\cdots,0,1}\right]
\end{equation}

By way of example, we illustrate the matrices $\mathbf{L}_5$ and $\mathbf{M}_5$ for the 5-strategy cyclic game.
\begin{displaymath}
\mathbf{L}_5 =
\begin{bmatrix}
0 & -1 & 1 & -1 & 1 \\
 1 & 0 & -1 & 1 & -1 \\
 -1 & 1 & 0 & -1 & 1 \\
 1 & -1 & 1 & 0 & -1 \\
 -1 & 1 & -1 & 1 & 0
\end{bmatrix}, \qquad
\mathbf{M}_5 =
\begin{bmatrix}
0 & 0 & 1 & 0 & 1 \\
 1 & 0 & 0 & 1 & 0 \\
 0 & 1 & 0 & 0 & 1 \\
 1 & 0 & 1 & 0 & 0 \\
 0 & 1 & 0 & 1 & 0
\end{bmatrix}.
\end{displaymath}
From a game-theoretic perspective, we can think of $\mathbf{L}_N$ as being the traditional payoff matrix of the complete cyclic game with $N$ strategies. $\mathbf{M}_N$ can be thought of an actuating matrix that will determine whether the interior fixed point of the complete cyclic game is stable or unstable.

In the remainder of this paper, we will consider the generalized cyclic game with $N$ strategies where $N = 3, 5, 7, \dots$, and we note that both $L_N$ and $M_N$ are circulant matrices. The payoff matrix for the generalized cyclic game with $N$ strategies and parameter $\gamma$ is:
\begin{displaymath}
\mathbf{A}_N(\gamma) = \mathbf{L}_N + \gamma\mathbf{M}_N.
\end{displaymath}
If $\gamma = 0$ and $N = 3$, then $\mathbf{A}_3(\gamma)$ is just the rock-paper-scissors matrix. Without loss of generality, we assume $\gamma > -1$. Otherwise, the natural winning precedence in the cyclic game is reversed.


In the control problem defined in the sequel, the replicator dynamics are the nonlinear equations of motion with control parameter $\gamma$:
\begin{equation}
\mathcal{S}_N =\left\{
\begin{aligned}
&\dot{\mathbf{u}}_i = \mathbf{u}_i\left((\mathbf{e}_i - \mathbf{u})^T\mathbf{A}_N(\gamma)\mathbf{u}\right),\\
&\mathbf{u}(0) = \mathbf{u}_0.
\end{aligned}\right.
\label{eqn:RepDyn}
\end{equation}
Here $\mathbf{u} = \langle{u_1,\dots,u_N}\rangle$ is the vector denoting the proportion of the population playing each of the $N$ strategies. It is well known \cite{HS98,Wei95} that if $\mathbf{u}_0 \in \Delta_N$, then $\mathbf{u}(t)$ is confined to $\Delta_N$ for all time. For the remainder of this paper, we assume $\mathbf{u}_0 \in \Delta_N$.


The following lemma and corollary can be found in \cite{HS98} (page 174).

\begin{lemma} Let $n \in \{1,2,\dots,\}$ and $N = 2n+1$, then:
\begin{enumerate}
\item $\mathcal{S}_N$ has among its fixed points $\mathbf{e}_i \in \Delta_N$ ($i=1,\dots,N$) and $\mathbf{u}^*=\frac{1}{N}\mathbf{1}$ in the interior of $\Delta_N$.

\item Furthermore if $\frac{1}{N}\mathbf{1}$ is stable, it is globally asymptotically stable on the interior of $\Delta_N$. If $\frac{1}{N}\mathbf{1}$ is unstable, then all trajectories converge to the boundary of $\Delta_N$ unless $\mathbf{u}_0 = \mathbf{u}^* = \frac{1}{N}\mathbf{1}$.
\end{enumerate}
\hfill\qed
\end{lemma}
%
\begin{corollary} The fixed point $\mathbf{u}^* = \frac{1}{N}\mathbf{1}$ is the unique interior fixed point for $\mathcal{S}_N$.\hfill\qed
\label{cor:uu}
\end{corollary}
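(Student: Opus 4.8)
The plan is to reduce the statement to a purely linear-algebraic fact about the kernel of the circulant matrix $\mathbf{A}_N(\gamma)$, and then to exploit the shared Fourier eigenbasis of the circulant algebra. First I would characterize interior fixed points. If $\mathbf{u} \in \mathrm{int}(\Delta_N)$ is a fixed point of $\mathcal{S}_N$, then every $\mathbf{u}_i > 0$, so $\dot{\mathbf{u}}_i = 0$ forces $(\mathbf{e}_i - \mathbf{u})^T\mathbf{A}_N(\gamma)\mathbf{u} = 0$ for each $i$, i.e. $\mathbf{e}_i^T\mathbf{A}_N(\gamma)\mathbf{u} = \mathbf{u}^T\mathbf{A}_N(\gamma)\mathbf{u}$. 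Since the right-hand side $c := \mathbf{u}^T\mathbf{A}_N(\gamma)\mathbf{u}$ is independent of $i$, this is equivalent to the single linear condition
\begin{displaymath}
\mathbf{A}_N(\gamma)\mathbf{u} = c\,\mathbf{1}, \qquad \mathbf{1}^T\mathbf{u} = 1.
\end{displaymath}
The preceding Lemma already exhibits $\mathbf{u}^* = \tfrac{1}{N}\mathbf{1}$ as such a point, so it suffices to show this system has a \emph{unique} solution on the interior.

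Next I would invoke the circulant structure. Because $\mathbf{L}_N$ and $\mathbf{M}_N$ are both circulant, so is $\mathbf{A}_N(\gamma)$, and all three are simultaneously diagonalized by the Fourier basis $\mathbf{v}_j = (1,\omega_j,\dots,\omega_j^{N-1})^T$, with $\mathbf{v}_0 = \mathbf{1}$. Expanding $\mathbf{u} = \sum_{j=0}^{N-1}\alpha_j\mathbf{v}_j$ and using $\mathbf{A}_N(\gamma)\mathbf{v}_j = \lambda_j\mathbf{v}_j$, the equation $\mathbf{A}_N(\gamma)\mathbf{u} = c\mathbf{v}_0$ becomes $\sum_j \alpha_j\lambda_j\mathbf{v}_j = c\mathbf{v}_0$; matching coefficients in the independent family $\{\mathbf{v}_j\}$ gives $\alpha_j\lambda_j = 0$ for every $j \neq 0$. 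Hence, provided $\lambda_j \neq 0$ for all $j \neq 0$, we get $\alpha_j = 0$ for $j \neq 0$, so $\mathbf{u} = \alpha_0\mathbf{1}$, and the normalization $\mathbf{1}^T\mathbf{u} = 1$ pins down $\mathbf{u} = \tfrac{1}{N}\mathbf{1} = \mathbf{u}^*$. Uniqueness follows at once.

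The crux, and the step I expect to require the most care, is therefore verifying that no nontrivial eigenvalue of $\mathbf{A}_N(\gamma)$ vanishes across the whole admissible range $\gamma > -1$. Here I would use the identity $\mathbf{M}_N = \tfrac{1}{2}(\mathbf{L}_N + \mathbf{J} - \mathbf{I})$ (valid since $\mathbf{L}_N$ carries $+1$ exactly where $\mathbf{M}_N$ carries $1$ and $-1$ on the remaining off-diagonal entries), where $\mathbf{J}$ is the all-ones matrix. For $j \neq 0$ we have $\lambda_j(\mathbf{J}) = 0$, so
\begin{displaymath}
\lambda_j\bigl(\mathbf{A}_N(\gamma)\bigr) = \Bigl(1+\tfrac{\gamma}{2}\Bigr)\lambda_j(\mathbf{L}_N) - \tfrac{\gamma}{2}.
\end{displaymath}
Since $\mathbf{L}_N$ is skew-symmetric, $\lambda_j(\mathbf{L}_N)$ is purely imaginary; an explicit evaluation of the circulant eigenvalue gives $\lambda_j(\mathbf{L}_N) = -\,i\tan(\pi j/N)$, which is nonzero for $0 < j < N$ (and well defined because $-1$ is never an $N$th root of unity for odd $N$). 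Consequently the real part of $\lambda_j(\mathbf{A}_N(\gamma))$ is $-\gamma/2$ and its imaginary part is $-(1+\gamma/2)\tan(\pi j/N)$. If $\gamma \neq 0$ the real part is nonzero; if $\gamma = 0$ the imaginary part is nonzero. In either case $\lambda_j(\mathbf{A}_N(\gamma)) \neq 0$ for all $j \neq 0$, which closes the argument. The only genuinely delicate point is the $\gamma = 0$ case, where invertibility on $\mathbf{1}^\perp$ rests entirely on the kernel of the odd-dimensional skew-symmetric matrix $\mathbf{L}_N$ being exactly one-dimensional, i.e. on $\tan(\pi j/N) \neq 0$ for $j \neq 0$.
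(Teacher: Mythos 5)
Your proposal is correct, and it reaches the conclusion by a genuinely different route than the paper. Both arguments begin identically, reducing an interior fixed point to the linear system $\mathbf{A}_N(\gamma)\mathbf{u} = c\,\mathbf{1}$ with $\mathbf{1}^T\mathbf{u} = 1$; but the paper then splits into the cases $\gamma = 0$ and $\gamma \neq 0$ and asserts uniqueness via elementary row reduction (using $\mathbf{u}^T\mathbf{M}_N\mathbf{u} = \sum_i\sum_{j>i}u_iu_j$ to identify the constant), whereas you diagonalize in the Fourier basis and show that no eigenvalue $\lambda_j(\mathbf{A}_N(\gamma))$ with $j \neq 0$ vanishes, so that the solution space of $\mathbf{A}_N(\gamma)\mathbf{u} = c\,\mathbf{1}$ is exactly the span of $\mathbf{1}$. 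Your key identity $\mathbf{M}_N = \tfrac{1}{2}\left(\mathbf{L}_N + \mathbf{1}_N - \mathbf{I}_N\right)$ checks out entry by entry against the definitions of $\mathbf{L}_N$ and $\mathbf{M}_N$ (the paper only derives the weaker symmetrized version $\mathbf{M}_N^T + \mathbf{M}_N = \mathbf{1}_N - \mathbf{I}_N$ inside Lemma \ref{lem:5H}), and the computation $\lambda_j(\mathbf{L}_N) = \frac{1-\omega_j}{1+\omega_j} = -i\tan(\pi j/N)$ is correct and well defined precisely because $N$ is odd. What your approach buys is rigor and extra information: the paper's "row reduction shows" step is left implicit (and its claim that the $\gamma=0$ system "has rank $n$ with $n$ unknowns" is at best loosely stated), while your argument pins the one-dimensionality of $\ker\mathbf{L}_N$ to the explicit nonvanishing of $\tan(\pi j/N)$, and the eigenvalue formula $\lambda_j(\mathbf{A}_N(\gamma)) = (1+\tfrac{\gamma}{2})\lambda_j(\mathbf{L}_N) - \tfrac{\gamma}{2}$ could be reused to shorten the spectral computation in Theorem \ref{thm:GammaSign} (indeed its real part $-\gamma/2$, after accounting for the $1/N$ and $1/N^2$ scalings of $\mathbf{J}$ and $\mathbf{H}$, recovers the sign dichotomy proved there). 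What the paper's approach buys is elementarity — no complex eigenbasis is needed. One cosmetic caution: your all-ones matrix $\mathbf{J}$ collides with the paper's symbol $\mathbf{J}$ for the Jacobian of $\mathbf{F}$; use $\mathbf{1}_N$ as the paper does.
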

Let $\mathbf{F},\mathbf{G}:\Delta_N \rightarrow \mathbb{R}^n$ be defined component-wise as:
\begin{align}
\mathbf{F}_i(\mathbf{u}) &= u_i\left((\mathbf{e}_i - \mathbf{u})\mathbf{L}_N\mathbf{u}\right)\label{eqn:Fz},\\
\mathbf{G}_i(\mathbf{u}) &= u_i\left((\mathbf{e}_i - \mathbf{u})\mathbf{M}_N\mathbf{u}\right)\label{eqn:Gz}.
\end{align}
The replicator dynamics are then:
\begin{displaymath}
\dot{\mathbf{u}} = \mathbf{F}(\mathbf{u}) + \gamma\mathbf{G}(u),
\end{displaymath}
which are the dynamics that will be used in the control problem of interest.

We note that $\mathbf{F}$ and $\mathbf{G}$ are the functional imprints of the standard payoff matrix $\mathbf{L}_N$ and the actuation matrix $\mathbf{M}_N$ within the replicator framework. Understanding their Jacobian matrices is essential in understanding the dynamics. The structure of the Jacobian matrix for a circulant matrix is given in \cite{HS98} (page 173). However, we need an explicit form not presented in that text. The proof of the following lemmas is given in Appendix \ref{sec:TechProofs}.

\begin{lemma} The Jacobian matrix of $\mathbf{F}$ evaluated at $\mathbf{u}^* = \frac{1}{N}\mathbf{1}$ is:
\begin{displaymath}
\mathbf{J} \triangleq D_\mathbf{u}\mathbf{F} = \frac{1}{N}\mathbf{L}_N.
\end{displaymath}
Consequently, $\mathbf{J}$ is a circulant matrix.
\hfill\qed
\label{lem:DF}
\end{lemma}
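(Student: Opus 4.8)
The plan is to exploit the skew-symmetry of $\mathbf{L}_N$ to simplify $\mathbf{F}$ \emph{before} differentiating. Expanding the quadratic form gives $\mathbf{F}_i(\mathbf{u}) = u_i\left(\mathbf{e}_i^T\mathbf{L}_N\mathbf{u} - \mathbf{u}^T\mathbf{L}_N\mathbf{u}\right)$. As is visible from the displayed matrix $\mathbf{L}_5$, the off-diagonal sign pattern satisfies $\mathbf{L}_{N_{ij}} = -\mathbf{L}_{N_{ji}}$, so $\mathbf{L}_N$ is skew-symmetric and hence $\mathbf{u}^T\mathbf{L}_N\mathbf{u} = 0$ for \emph{every} $\mathbf{u} \in \mathbb{R}^N$, not merely on $\Delta_N$. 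This is the same vanishing already invoked in the proof of Corollary \ref{cor:uu}. Because this term is identically zero as a function on all of $\mathbb{R}^N$, it contributes nothing to the derivative, and I may work throughout with the single-term expression $\mathbf{F}_i(\mathbf{u}) = u_i(\mathbf{L}_N\mathbf{u})_i = u_i\sum_{k} \mathbf{L}_{N_{ik}} u_k$.

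Next I would compute the Jacobian entries directly from this reduced form. By the product rule,
\begin{displaymath}
\frac{\partial \mathbf{F}_i}{\partial u_j} = \delta_{ij}\,(\mathbf{L}_N\mathbf{u})_i + u_i\,\mathbf{L}_{N_{ij}},
\end{displaymath}
where $\delta_{ij}$ is the Kronecker delta. This is the only genuinely computational step, and it is routine. I would then evaluate at $\mathbf{u}^* = \frac{1}{N}\mathbf{1}$. The diagonal contribution $\delta_{ij}(\mathbf{L}_N\mathbf{u}^*)_i$ vanishes because $\mathbf{L}_N\mathbf{u}^* = \mathbf{0}$ — the relation already recorded above in the proof that $\mathbf{u}^*$ is a fixed point — which holds since, with $N = 2n+1$, each row of $\mathbf{L}_N$ carries exactly $n$ entries equal to $+1$ and $n$ equal to $-1$ and therefore sums to zero. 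The surviving term gives $u_i^*\,\mathbf{L}_{N_{ij}} = \frac{1}{N}\mathbf{L}_{N_{ij}}$, and assembling these entries over all $i,j$ yields $\mathbf{J} = \frac{1}{N}\mathbf{L}_N$, as claimed.

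There is essentially no deep obstacle here: the argument is a one-line simplification followed by a product-rule computation. The only subtlety worth flagging is the order of operations in the first step — it is cleaner to discard the quadratic term using skew-symmetry first (valid on all of $\mathbb{R}^N$) rather than to differentiate the full expression and then appeal to the constraint $\mathbf{1}^T\mathbf{u} = 1$, since the latter route risks ambiguity about whether one is differentiating a constrained function on the simplex. Treating $\mathbf{u}^T\mathbf{L}_N\mathbf{u}$ as the identically zero function removes that concern entirely.
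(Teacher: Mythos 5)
Your proof is correct and follows essentially the same route as the paper's: both discard $\mathbf{u}^T\mathbf{L}_N\mathbf{u}$ via skew-symmetry before differentiating and then apply the product rule, with the diagonal contribution vanishing because $\mathbf{L}_N\mathbf{u}^* = \mathbf{0}$. The only cosmetic difference is that the paper computes row~1 explicitly (as the alternating sum $u_1\sum_{j\geq 2}(-1)^{j-1}u_j$) and invokes the circulant structure of $\mathbf{L}_N$ to propagate to the remaining rows, whereas your uniform entry formula $\partial\mathbf{F}_i/\partial u_j = \delta_{ij}(\mathbf{L}_N\mathbf{u})_i + u_i\mathbf{L}_{N_{ij}}$ handles all entries at once and never actually needs circulance.
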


\begin{lemma} The Jacobian matrix of $\mathbf{G}$ evaluated at $\mathbf{u}^* = \frac{1}{N}\mathbf{1}$ is:
\begin{displaymath}
\mathbf{H} \triangleq D_\mathbf{u}\mathbf{G} = \frac{1}{N^2}\mathbf{M}_N - \frac{2n}{N^2}\left(\mathbf{1}_N - \mathbf{M}_N\right) = \frac{1}{N^2}\left(N\left(\mathbf{M}_N-\mathbf{1}_N\right) + \mathbf{1}_N\right),
\end{displaymath}
where $\mathbf{1}_N$ is an $N \times N$ matrix of $1$'s. Consequently $\mathbf{H}$ is a circulant matrix.
\hfill\qed
\label{lem:DG}
\end{lemma}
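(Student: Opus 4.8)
The plan is to mirror the proof of Lemma \ref{lem:DF}, again exploiting the circulant structure of $\mathbf{M}_N$ so that it suffices to compute one row of the Jacobian and then appeal to circularity. The essential new feature, and the reason the answer takes a different form than in the $\mathbf{F}$ case, is that the quadratic term no longer drops out: unlike $\mathbf{L}_N$ (which is antisymmetric, forcing $\mathbf{u}^T\mathbf{L}_N\mathbf{u}=0$), the matrix $\mathbf{M}_N$ is not antisymmetric, so $\mathbf{u}^T\mathbf{M}_N\mathbf{u}$ genuinely contributes. I would therefore begin by writing, from Equation \ref{eqn:Gz}, $\mathbf{G}_i(\mathbf{u}) = u_i(\mathbf{M}_N\mathbf{u})_i - u_i Q(\mathbf{u})$, where $Q(\mathbf{u}) = \mathbf{u}^T\mathbf{M}_N\mathbf{u} = \sum_{i<j} u_i u_j$, the value already computed in the proof of Corollary \ref{cor:uu}, and then differentiate termwise with the product rule.

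Carrying out the differentiation gives $\partial \mathbf{G}_i/\partial u_k = \delta_{ik}(\mathbf{M}_N\mathbf{u})_i + u_i (\mathbf{M}_N)_{ik} - \delta_{ik}Q(\mathbf{u}) - u_i\,\partial Q/\partial u_k$. To evaluate at $\mathbf{u}^*=\tfrac1N\mathbf{1}$, I need three elementary ingredients: first, each row of the circulant matrix $\mathbf{M}_N$ contains exactly $n$ ones (the off-diagonal entries of $\mathbf{L}_N$ alternate in sign over $2n$ positions, giving $n$ positive entries), so $(\mathbf{M}_N\mathbf{u}^*)_i = n/N$; second, $Q(\mathbf{u}^*) = \binom{N}{2}/N^2 = (N-1)/(2N) = n/N$; and third, $\partial Q/\partial u_k = \sum_{j\neq k} u_j$, which equals $(N-1)/N = 2n/N$ at $\mathbf{u}^*$.

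Substituting these values, the two diagonal contributions cancel exactly, since $\delta_{ik}(\mathbf{M}_N\mathbf{u}^*)_i = \delta_{ik}\,n/N = \delta_{ik}Q(\mathbf{u}^*)$, and what remains is $\mathbf{H}_{ik} = \tfrac{1}{N}(\mathbf{M}_N)_{ik} - \tfrac{2n}{N^2}$. The final step is the purely algebraic identification of this entrywise formula with the claimed matrix expression: using $N = 2n+1$ one checks that $\tfrac{1}{N}(\mathbf{M}_N)_{ik} - \tfrac{2n}{N^2} = \tfrac{1}{N^2}(\mathbf{M}_N)_{ik} - \tfrac{2n}{N^2}\bigl(1 - (\mathbf{M}_N)_{ik}\bigr)$, which is exactly the $(i,k)$ entry of $\tfrac{1}{N^2}\mathbf{M}_N - \tfrac{2n}{N^2}(\mathbf{1}_N - \mathbf{M}_N)$, and the two forms in the statement agree after collecting terms.

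The only place demanding care, rather than a true obstacle, is the bookkeeping of the cancellation: one must keep the non-vanishing quadratic term $u_i Q(\mathbf{u})$ throughout (it cannot be discarded as in Lemma \ref{lem:DF}) and verify that its value at $\mathbf{u}^*$ precisely offsets the diagonal row-sum term while its gradient supplies the uniform $-\tfrac{2n}{N^2}\mathbf{1}_N$ piece. Because every quantity involved is constant across rows and $\mathbf{M}_N$ is circulant, computing Row $1$ and invoking circularity—as in the previous lemma—suffices to conclude for all rows.
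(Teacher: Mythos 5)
Your proposal is correct and follows essentially the same route as the paper: differentiate the components of $\mathbf{G}$ directly, evaluate at $\mathbf{u}^* = \tfrac{1}{N}\mathbf{1}$ using the fact that each row of $\mathbf{M}_N$ has exactly $n$ ones, and invoke the circulant structure; your unified product-rule formula with Kronecker deltas merely replaces the paper's three-case analysis of Row 1 and makes the cancellation $\delta_{ik}(\mathbf{M}_N\mathbf{u}^*)_i = \delta_{ik}Q(\mathbf{u}^*) = \delta_{ik}\,n/N$ explicit. Your entrywise value $-\tfrac{2n}{N^2}$ for the diagonal and the zero positions of $\mathbf{M}_N$ is the correct one (the paper's intermediate displays writing $-\tfrac{2n}{N}$ contain a typographical slip, as its own numerator $-2nN/N^2$ shows), and your final algebraic identification with the stated matrix form checks out.
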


\begin{theorem} If $\gamma > 0$, then the fixed point $\mathbf{u}^* = \frac{1}{N}\mathbf{1}$ is asymptotically stable. If $\gamma < 0$, then the fixed point $\mathbf{u}^*$ is asymptotically unstable.
\label{thm:GammaSign}
\end{theorem}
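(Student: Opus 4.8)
The plan is to linearize the full replicator system $\dot{\mathbf{u}} = \mathbf{F}(\mathbf{u}) + \gamma\mathbf{G}(\mathbf{u})$ at $\mathbf{u}^* = \tfrac{1}{N}\mathbf{1}$ and read stability off the spectrum of the Jacobian, restricted to the tangent space of the simplex. By Lemmas \ref{lem:DF} and \ref{lem:DG} this Jacobian is
\[
\mathbf{J} + \gamma\mathbf{H} = \frac{1}{N}\mathbf{L}_N + \frac{\gamma}{N^2}\bigl(N(\mathbf{M}_N - \mathbf{1}_N) + \mathbf{1}_N\bigr),
\]
a sum of circulant matrices and hence circulant. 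Since trajectories are confined to $\Delta_N$, the relevant object is the restriction of this Jacobian to the tangent space $T = \{\mathbf{v} : \mathbf{1}^T\mathbf{v} = 0\}$; the eigenvalue in the constraint direction $\mathbf{1}$ is irrelevant and can be discarded.

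First I would exploit the common circulant eigenbasis. The Fourier vectors $\mathbf{v}_j = (1,\omega_j,\dots,\omega_j^{N-1})^T$, with $\omega_j$ the $N$th roots of unity, simultaneously diagonalize every circulant matrix, and for $j \neq 0$ they satisfy $\mathbf{1}^T\mathbf{v}_j = 0$, so the $N-1$ vectors $\{\mathbf{v}_j : j \neq 0\}$ span the complexification of $T$ (and in particular $T$ is invariant under the Jacobian). On each such $\mathbf{v}_j$ the all-ones matrix $\mathbf{1}_N$ acts as $0$, so the Jacobian reduces on $T$ to $\tfrac{1}{N}(\mathbf{L}_N + \gamma\mathbf{M}_N) = \tfrac{1}{N}\mathbf{A}_N(\gamma)$. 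The tangent-space eigenvalues are therefore exactly $\lambda_j = \tfrac{1}{N}(\ell_j + \gamma m_j)$ for $j = 1,\dots,N-1$, where $\ell_j$ and $m_j$ are the circulant eigenvalues of $\mathbf{L}_N$ and $\mathbf{M}_N$ on $\mathbf{v}_j$.

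The crux of the argument is identifying $\mathrm{Re}(m_j)$ without evaluating the circulant eigenvalue sum directly. Here I would use two structural identities that hold because $\mathcal{G}(\mathbf{M}_N)$ is a complete tournament: $\mathbf{L}_N = \mathbf{M}_N - \mathbf{M}_N^T$ and $\mathbf{M}_N + \mathbf{M}_N^T = \mathbf{1}_N - \mathbf{I}$, where $\mathbf{I}$ is the identity. Since $\mathbf{M}_N$ is real and circulant, $\mathbf{M}_N^T$ shares the eigenvectors $\mathbf{v}_j$ with eigenvalue $\overline{m_j}$. The first identity then gives $\ell_j = m_j - \overline{m_j} = 2i\,\mathrm{Im}(m_j)$ (confirming that $\mathbf{L}_N$ is skew-symmetric with purely imaginary spectrum), while the second gives, for $j \neq 0$, $m_j + \overline{m_j} = -1$, i.e. $\mathrm{Re}(m_j) = -\tfrac{1}{2}$ uniformly in $j$. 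Combining these,
\[
\mathrm{Re}(\lambda_j) = \frac{1}{N}\bigl(\mathrm{Re}(\ell_j) + \gamma\,\mathrm{Re}(m_j)\bigr) = \frac{1}{N}\Bigl(0 - \frac{\gamma}{2}\Bigr) = -\frac{\gamma}{2N}
\]
for every $j \neq 0$.

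With this the conclusion is immediate: when $\gamma > 0$ every tangent eigenvalue has real part $-\gamma/(2N) < 0$, so $\mathbf{u}^*$ is asymptotically stable, and when $\gamma < 0$ every such real part is positive, so $\mathbf{u}^*$ is unstable; in both cases no eigenvalue lies on the imaginary axis, so by standard linearization theory the linear behavior governs the nonlinear one. I expect the main obstacle to be bookkeeping rather than depth: one must verify that $T$ is invariant and that restricting to it legitimately discards the $\mathbf{1}$-direction, and one must confirm that the tournament identities hold for $\mathbf{M}_N$ as defined. The genuinely useful idea is the uniform value $\mathrm{Re}(m_j) = -\tfrac{1}{2}$, which sidesteps any explicit root-of-unity computation and makes the sign of $\gamma$ the sole determinant of stability.
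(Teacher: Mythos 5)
Your proof is correct and lands on exactly the paper's key quantity, $\mathrm{Re}(\lambda_j) = -\gamma/(2N)$ (the paper writes this as $-\tfrac{\gamma}{N^2}(n+\tfrac{1}{2})$), but it gets there by a cleaner route. The paper writes out the first row of the circulant Jacobian explicitly and evaluates $\mathrm{Re}(\lambda_j) = \sum_k \bm{\mathcal{J}}_{1,k}\cos(2\pi j(k-1)/N)$ directly, pairing cosine terms and invoking $\sum_{k=2}^{n+1}\cos(2\pi j(k-1)/N) = -\tfrac{1}{2}$; it also computes the $j=0$ eigenvalue separately rather than discarding it. You instead pass to the common Fourier eigenbasis, note that $\mathbf{1}_N$ annihilates the tangent directions so the Jacobian reduces there to $\tfrac{1}{N}\mathbf{A}_N(\gamma)$, and extract $\mathrm{Re}(m_j) = -\tfrac{1}{2}$ and $\mathrm{Re}(\ell_j) = 0$ from the tournament identities $\mathbf{L}_N = \mathbf{M}_N - \mathbf{M}_N^T$ and $\mathbf{M}_N + \mathbf{M}_N^T = \mathbf{1}_N - \mathbf{I}$ (the latter identity the paper itself uses later, in Lemma \ref{lem:5H}). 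This sidesteps the trigonometric bookkeeping entirely, though your $\mathrm{Re}(m_j)=-\tfrac12$ is of course the same fact as the paper's cosine sum in disguise. Your restriction to the tangent space $T$ is legitimate (the $\mathbf{v}_j$, $j\neq 0$, span its complexification and $T$ is invariant), and is the right notion of stability for dynamics confined to $\Delta_N$; the only thing you give up relative to the paper is the explicit value of $\lambda_0$, which the paper shows also has the correct sign, so nothing is lost.
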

\begin{proof} From Lemmas \ref{lem:DF} and \ref{lem:DG}, we note that the Jacobian matrix of $\mathcal{S}_N$ at $\mathbf{u}^*$ has the following form:
\begin{displaymath}
\bm{\mathcal{J}} = \mathbf{J} + \gamma\mathbf{H} = \frac{1}{N}\mathbf{L}_N + \gamma\left(\frac{1}{N^2}\mathbf{M}_N - \frac{2n}{N^2}\left(\mathbf{1}_N - \mathbf{M}_N\right)\right).
\end{displaymath}
By its construction, it is a circulant matrix with first row given by:
\begin{equation}
\bm{\mathcal{J}}_{1j} =
\begin{cases}
- \gamma\frac{2n}{N^2} & \text{if $j = 1$},\\
-\frac{1}{N}-\gamma\frac{2n}{N^2} & \textit{if $j > 1$ and $j-1$ is odd},\\
\frac{1}{N} + \gamma\frac{1}{N^2} & \text{otherwise}.
\end{cases}
\label{eqn:JacobianRow1}
\end{equation}
Letting $\omega_j$ for $j = 0,\dots,N-1$ be the $N^\text{th}$ roots of unity\footnote{For details see \cite{D12}}, we know that the $j^{th}$ eigenvalue of $\bm{\mathcal{J}}$ is:
\begin{displaymath}
\sum_{k=1}^N\bm{\mathcal{J}}_{1,k}\omega_j^{k-1}.
\end{displaymath}
It now remains to show that the sign of the real-part of $\lambda_j$ is entirely dependent on $\gamma$. The real part of the eigenvalue is given by:
\begin{equation}
\mathrm{Re}\left(\lambda_j\right) =\sum_{k=1}^N \bm{\mathcal{J}}_{1,k}\cos\left(\frac{2\pi j(k-1)}{N}\right).
\label{eqn:EigReal}
\end{equation}
The first eigenvalue ($j=0$) is real and readily computed:
\begin{displaymath}
\lambda_0 = n\left(\gamma\frac{1}{N^2} - \gamma\frac{2n}{N^2} \right) - \gamma\frac{2n}{N^2} = -\gamma\frac{n(1+2n)}{N^2}.
\end{displaymath}
It is clear at once that the sign of this eigenvalue is entirely controlled by the sign of $\gamma$.

For $j > 0$, note that the periodicity of the cosine function (and the fact that the roots of unity are the vertices of the regular unit $N$-gon) implies that the coefficient of $\bm{\mathcal{J}}_{1,k}$ is identical to the coefficient of $\bm{\mathcal{J}}_{1,N-(k-2)}$ if $2 \leq k \leq n+1$. From this fact and Expression \ref{eqn:JacobianRow1}, the sum in Equation \ref{eqn:EigReal} becomes:
\begin{displaymath}
\mathrm{Re}\left(\lambda_j\right) = \gamma\left(-\frac{2n}{N^2}+\sum_{k=2}^{n+1}\cos\left(\frac{2\pi j(k-1)}{N}\right)\left(\frac{1}{N^2}-\frac{2n}{N^2} \right)\right).
\end{displaymath}
Factoring further we see:
\begin{multline*}
\mathrm{Re}\left(\lambda_j\right) = \frac{\gamma}{N^2}\left(-2n +
\sum_{k=2}^{n+1}\cos\left(\frac{2\pi j(k-1)}{N}\right)\left(1-2n\right)
\right) = \\ \frac{\gamma}{N^2}\left(-2n + \left(1-2n\right)\left( \sum_{k=2}^{n+1}\cos\left(\frac{2\pi j(k-1)}{N}\right)
\right) \right).
\end{multline*}
The roots of unity are evenly distributed on the vertices of the unit $N$-gon in $\mathbb{C}$ and therefore the sum of the real parts must be zero. It follows that:
\begin{displaymath}
\sum_{k=2}^{n+1}\cos\left(\frac{2\pi j(k-1)}{N}\right) = -\frac{1}{2}.
\end{displaymath}
We now obtain an exact value for the real parts of the eigenvalues:
\begin{equation}
\mathrm{Re}\left(\lambda_j\right) = \frac{\gamma}{N^2}\left(-2n - \frac{1}{2}\left(1-2n\right)\right) = -\frac{\gamma}{N^2}\left(n+\frac{1}{2}\right).
\end{equation}
Thus we have proved that when $\gamma > 0$, then $\mathrm{Re}(\lambda_j) < 0$ for all $j$ and if $\gamma < 0$, then $\mathrm{Re}(\lambda_j) > 0$. The asymptotic stability (resp. instability) of the fixed point follows immediately.
\end{proof}

\section{The Control Problem and Some General Results}\label{sec:GeneralControl}
We now state our control problem of interest:
\begin{equation}
\mathcal{C}_N = \left\{
\begin{aligned}
\min \;\; & \int_0^{t_f}\;\frac{1}{2}\norm{\mathbf{u} - \mathbf{u}^*}^2 + \frac{r}{2}\gamma^2\;\;dt\\
s.t.\;\; &\dot{\mathbf{u}} = \mathbf{F}(\mathbf{u}) + \gamma\mathbf{G}(\mathbf{u}),\\
&\mathbf{u}(0) = \mathbf{u}_0.
\end{aligned}\right.
\label{eqn:MainControl}
\end{equation}
where $\mathbf{u}^* = \tfrac{1}{N}\mathbf{1}$. Such a problem arises naturally if we consider agents interacting in a cyclic manner and $\gamma$ is a costly control mechanism, i.e., a penalty by which a benevolent social planner may control species populations. Because such a control mechanism is inefficient, the controller seeks to minimize the penalty while driving the population toward a mixed state. As in \cite{GB17}, we will show that a quasi-linearization of this control problem has special structure, and we extend our results to show that this special structure holds for all odd cyclic games (i.e., for all $N = 2n+1$). Furthermore, we discuss the limiting behavior of the control as $N$ grows large. To do this, we first consider a very general optimal control problem and obtain necessary conditions for simplifying the Euler-Lagrange necessary conditions. We then use these simplifications to generalize the results in \cite{GB17}. For the interested reader, an overview of optimal control necessary and sufficient conditions can be found in \cite{Mang66,Ki04,Friesz10} with the more modern Geometric Optimal Control found in \cite{SL12}. We provide a summary of the elementary results from optimal control theory used in the remainder of this paper in Appendix \ref{sec:OptimalControl}.

\subsection{Control Problems with One Control and Vanishing Lie Bracket}
In the remainder of this section, the functions $\mathbf{F},\mathbf{G}:\mathbb{R}^n \rightarrow \mathbb{R}^n$ are arbitrary smooth functions, rather than the functions specific to the replicator dynamics for cyclic games given in Equations \ref{eqn:Fz} and \ref{eqn:Gz}, $\mathbf{x} \in \mathbb{R}^n$ is a state vector, and $\gamma$ is the control function to be determined.

Consider the optimal control problem with form:
\begin{equation}
\left\{
\begin{aligned}
\min\;\; & \Psi(\mathbf{x}(t_f)) + \int_0^{t_f} F_0(\mathbf{x}) + \gamma G_0(\mathbf{x}) + \frac{r}{2} \gamma^2 \;\; dt\\
s.t.\;\; & \dot{\mathbf{x}} = \mathbf{F}(\mathbf{x}) + \gamma\mathbf{G}(\mathbf{x}),\\
& \mathbf{x}(0) = \mathbf{x}_0.
\end{aligned}
\right.
\label{eqn:main}
\end{equation}
The functions $F_0,G_0:\mathbb{R}^n \rightarrow \mathbb{R}$ are smooth. Let $r > 0$, $t_f$ be the terminal time, and $F_0(\mathbf{x})$ be convex. Expression \ref{eqn:MainControl} has this structure, so we are simply considering a more general case of our problem of interest.

The Euler-Lagrange necessary conditions for control are simple to derive for this problem and have an almost linear behavior. Note the Hamiltonian is:
\begin{equation}
\mathcal{H}(\mathbf{x},\gamma,\bm{\lambda}) = F_0(\mathbf{x}) +\gamma G_0(\mathbf{x}) + \frac{r}{2}\gamma^2 + \bm{\lambda}^T\mathbf{F}(\mathbf{x})  + \gamma \bm{\lambda}^T\mathbf{G}(\mathbf{x}).
\label{eqn:Hamiltonian}
\end{equation}
The Hamiltonian is (strictly) convex in the control $\gamma$, and thus we propose the following:
\begin{lemma} Any solution $\gamma^*$ to $\mathcal{H}_\gamma = 0$ satisfies the necessary conditions:
\begin{enumerate}
\item $\mathcal{H}_\gamma = 0$, and
\item $\mathcal{H}_{\gamma\gamma} > 0$, the strong Legendre-Clebsch condition;
\end{enumerate}
therefore, it minimizes the Hamiltonian at all times. \hfill\qed
\label{lem:NecessaryControl}
\end{lemma}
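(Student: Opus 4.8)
The plan is to exploit the single structural feature that makes this lemma work: the running cost contributes the term $\tfrac{r}{2}\gamma^2$ with $r>0$, so for each fixed state-costate pair $(\mathbf{x},\bm{\lambda})$ the Hamiltonian in Equation~\ref{eqn:Hamiltonian} is a strictly convex quadratic polynomial in the scalar control $\gamma$. Once this is recognized, the entire argument collapses to elementary single-variable calculus, and no constrained optimization or boundary analysis is needed (the minimization over $\gamma$ being understood as unconstrained).

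First I would differentiate $\mathcal{H}$ with respect to $\gamma$ directly from Equation~\ref{eqn:Hamiltonian}, obtaining
\[
\mathcal{H}_\gamma = G_0(\mathbf{x}) + r\gamma + \bm{\lambda}^T\mathbf{G}(\mathbf{x}).
\]
By definition $\gamma^*$ is any root of this expression, so Condition~1 holds tautologically; and since the equation is affine in $\gamma$ with nonzero slope $r$, that root is in fact unique, namely $\gamma^* = -\tfrac{1}{r}\big(G_0(\mathbf{x}) + \bm{\lambda}^T\mathbf{G}(\mathbf{x})\big)$. I would then differentiate once more to get $\mathcal{H}_{\gamma\gamma} = r$, which is strictly positive by hypothesis, establishing the strong Legendre-Clebsch condition (Condition~2) at every point, independently of $\mathbf{x}$, $\bm{\lambda}$, and $t$.

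The concluding step, and the only point I regard as worth emphasizing, is that here the second-order condition is global rather than merely local. In general the Legendre-Clebsch condition only certifies local minimality of a stationary control; but because $\mathcal{H}_{\gamma\gamma}=r$ is a \emph{positive constant}, the map $\gamma \mapsto \mathcal{H}(\mathbf{x},\gamma,\bm{\lambda})$ is globally strictly convex, so its unique stationary point $\gamma^*$ is its global minimizer over all real $\gamma$. Evaluating this pointwise in $t$ along any admissible trajectory yields precisely the pointwise minimization of the Hamiltonian demanded by Pontryagin's minimum principle, which is the assertion of the lemma. There is no substantive obstacle in the argument; the constancy of $\mathcal{H}_{\gamma\gamma}$ is exactly what promotes the local necessary condition to the global minimization statement.
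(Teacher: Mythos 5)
Your proposal is correct and follows exactly the reasoning the paper relies on: the paper states the lemma with an immediate \qed, justified only by the preceding remark that the Hamiltonian is strictly convex in $\gamma$, and your explicit computation of $\mathcal{H}_\gamma$ and $\mathcal{H}_{\gamma\gamma}=r>0$ (recovering the closed form in Equation~\ref{eqn:ClosedFormCoState}) simply fills in that one-line argument. The observation that global strict convexity upgrades the stationary point to a global minimizer is precisely the intended content.
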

Deriving the optimal control by solving $\partial \mathcal{H}/\partial{\gamma} = 0$ for $\gamma$ to obtain:
\begin{equation}
\gamma^* = -\frac{1}{r}\left(\bm{\lambda}^T\mathbf{G}(\mathbf{x})+G_0(\mathbf{x})\right).
\label{eqn:ClosedFormCoState}
\end{equation}
The two conditions in Lemma \ref{lem:NecessaryControl}, along with the fact that $\mathbf{x}^*$ and $\bm{\lambda}^*$ solve the resulting Euler-Lagrange two-point boundary value problem (see Expression \ref{eqn:EulerLagrange}), form the complete set of necessary conditions for the optimal control problem. Adding in the additional requirement that the corresponding matrix Riccati equation is bounded on $[0,t_f]$, these form sufficient conditions for a weak local minimal optimal controller \cite{BY65,J70}. We discuss this sufficient condition in the sequel.

For simplicity, we refer to the optimal control as $\gamma$ (rather than $\gamma^*$) in the remainder of this paper and assume it is given by Equation \ref{eqn:ClosedFormCoState}. The adjoint dynamics are:
\begin{equation}
\dot{\bm{\lambda}}^T = -(\nabla_\mathbf{x}F_0(\mathbf{x}))^T - \gamma(\nabla_\mathbf{x}G_0(\mathbf{x}))^T -
\bm{\lambda}^TD_\mathbf{x}\mathbf{F} - \gamma \bm{\lambda}^TD_\mathbf{x}\mathbf{G},
\label{eqn:costate}
\end{equation}
where $D_\mathbf{x}\mathbf{F}$ is the Jacobian (with respect to $\mathbf{x}$).
Thus we have the Euler-Lagrange two-point boundary value problem:
\begin{equation}
\left\{
\begin{aligned}
\dot{\mathbf{x}} &= \mathbf{F}(\mathbf{x}) + \gamma\mathbf{G}(\mathbf{x}),\\
\dot{\bm{\lambda}} &= -\nabla_\mathbf{x}F_0(\mathbf{x})-\gamma\nabla_\mathbf{x}G_0(\mathbf{x}) -(D_\mathbf{x}\mathbf{F})^T\bm{\lambda} - u(D_\mathbf{x}\mathbf{G})^T\bm{\lambda},\\
\mathbf{x}(0) &= \mathbf{x}_0,\\
\bm{\lambda}(t_f) &= \nabla_\mathbf{x}\Psi(\mathbf{x}[t_f]). \qquad \text{(Transverality Condition)}
\end{aligned}
\right.
\label{eqn:EulerLagrange}
\end{equation}
\begin{proposition} If $\gamma$ is an optimal control, then:
\begin{equation}
\gamma(t_f) = -\frac{1}{r}\left(\nabla_{\mathbf{x}}\Psi(\mathbf{x}(t_f))^T\mathbf{G}(\mathbf{x}(t_f)) + G_0(\mathbf{x}(t_f)) \right).
\end{equation}
\end{proposition}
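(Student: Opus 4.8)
The plan is to recognize that the stated identity is nothing more than the closed-form optimal control of Equation \ref{eqn:ClosedFormCoState} evaluated at the terminal time, after the transversality condition has been substituted for the co-state. Since the Hamiltonian in Equation \ref{eqn:Hamiltonian} is strictly convex in $\gamma$ (its second derivative is $\mathcal{H}_{\gamma\gamma} = r > 0$), Lemma \ref{lem:NecessaryControl} guarantees that the stationarity condition $\mathcal{H}_\gamma = 0$ determines the unique Hamiltonian-minimizing control at every instant $t \in [0,t_f]$. Solving $\mathcal{H}_\gamma = 0$ therefore yields the pointwise formula $\gamma(t) = -\tfrac{1}{r}\left(\bm{\lambda}(t)^T\mathbf{G}(\mathbf{x}(t)) + G_0(\mathbf{x}(t))\right)$, valid for all $t$, including the right endpoint.

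First I would evaluate this expression at $t = t_f$, obtaining $\gamma(t_f) = -\tfrac{1}{r}\left(\bm{\lambda}(t_f)^T\mathbf{G}(\mathbf{x}(t_f)) + G_0(\mathbf{x}(t_f))\right)$. Then I would invoke the transversality (terminal) condition from the Euler--Lagrange two-point boundary value problem in Expression \ref{eqn:EulerLagrange}, namely $\bm{\lambda}(t_f) = \nabla_\mathbf{x}\Psi(\mathbf{x}(t_f))$. Substituting this value of $\bm{\lambda}(t_f)$ directly into the evaluated control produces the claimed identity immediately.

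The argument is essentially a boundary evaluation, so I do not anticipate a genuine obstacle. The only point deserving care is justifying that the closed-form control is valid \emph{at} the right endpoint rather than merely on the open interval; this is immediate because the pointwise Hamiltonian minimization established in Lemma \ref{lem:NecessaryControl} holds for every $t \in [0,t_f]$, while the transversality condition pins down $\bm{\lambda}$ precisely at $t_f$. In particular, neither additional regularity hypotheses nor the boundedness of the matrix Riccati equation is required to obtain this endpoint relation.
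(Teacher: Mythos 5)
Your proposal is correct and follows exactly the paper's route: the paper's proof is the one-line observation that the identity "follows from the transversality condition," which is precisely your substitution of $\bm{\lambda}(t_f) = \nabla_\mathbf{x}\Psi(\mathbf{x}(t_f))$ into the closed-form control $\gamma = -\tfrac{1}{r}\left(\bm{\lambda}^T\mathbf{G}(\mathbf{x}) + G_0(\mathbf{x})\right)$ evaluated at $t = t_f$. Your additional remark that no Riccati boundedness or extra regularity is needed at the endpoint is accurate and a reasonable elaboration of the paper's terse argument.
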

\begin{proof} This follows from the transversality condition.
\end{proof}
From Equation \ref{eqn:ClosedFormCoState}, note that:
\begin{equation}
r\dot{\gamma} = -\dot{\bm{\lambda}}^T\mathbf{G}(\mathbf{x}) - \bm{\lambda}^T(D_\mathbf{x}\mathbf{G})\dot{\mathbf{x}} - (\nabla_\mathbf{x}G_0)\dot{\mathbf{x}}.
\end{equation}
Then:
\begin{multline}
r\dot{\gamma} = \left((\nabla_\mathbf{x}F_0(\mathbf{x}))^T + \gamma(\nabla_\mathbf{x}G_0(\mathbf{x}))^T+
\bm{\lambda}^T(D_\mathbf{x}\mathbf{F}) + \gamma \bm{\lambda}^T(D_\mathbf{x}\mathbf{G})\right)\mathbf{G}(\mathbf{x}) - \\
\bm{\lambda}^T(D_\mathbf{x}\mathbf{G})\left(\mathbf{F}(\mathbf{x}) + \gamma\mathbf{G}(\mathbf{x}) \right) -  (\nabla_\mathbf{x}G_0)\left(\mathbf{F}(\mathbf{x}) + \gamma\mathbf{G}(\mathbf{x}) \right).
\end{multline}
Simplifying we have:
\begin{displaymath}
r\dot{\gamma} = (\nabla_\mathbf{x}F_0(\mathbf{x}))^T\mathbf{G}(\mathbf{x}) - (\nabla_\mathbf{x}G_0(\mathbf{x}))^T\mathbf{F}(\mathbf{x}) + \bm{\lambda}^T\left((D_\mathbf{x}\mathbf{F})\mathbf{G}(\mathbf{x}) - (D_\mathbf{x}\mathbf{G})\mathbf{F}(x)\right).
\end{displaymath}
If the Lie Bracket vanishes, i.e.,:
\begin{equation}
[\mathbf{F},\mathbf{G}]=(D_\mathbf{x}\mathbf{F})\mathbf{G} - (D_\mathbf{x}\mathbf{G})\mathbf{F} = \mathbf{0},
\label{eqn:LieBracketVanish}
\end{equation}
then this simplifies to:
\begin{equation}
\dot{\gamma} = \frac{1}{r}\left(  (\nabla_\mathbf{x}F_0(\mathbf{x}))^T\mathbf{G}(\mathbf{x}) - (\nabla_\mathbf{x}G_0(\mathbf{x}))^T\mathbf{F}(\mathbf{x})  \right),
\end{equation}
and all co-state variables are eliminated. We have shown the following:
\begin{theorem} Consider the general optimal control problem given in Expression \ref{eqn:main}. If $[\mathbf{F},\mathbf{G}] = \mathbf{0}$ and $\gamma$ is an optimal control, then the pair $(\mathbf{x}(\gamma), \gamma)$ is a solution of the two point boundary value problem:
\begin{equation}
\left\{
\begin{aligned}
\dot{\mathbf{x}} = &\mathbf{F}(\mathbf{x}) + \gamma\mathbf{G}(\mathbf{x}),\\
\dot{\gamma} = & \frac{1}{r}\left(  (\nabla_\mathbf{x}F_0(\mathbf{x}))^T\mathbf{G}(\mathbf{x}) - (\nabla_\mathbf{x}G_0(\mathbf{x}))^T\mathbf{F}(\mathbf{x})  \right),\\
\mathbf{x}(0) =& \mathbf{x}_0,\\
\gamma(t_f) = & -\frac{1}{r}\left(\nabla_{\mathbf{x}}\Psi(\mathbf{x}(t_f))^T\mathbf{G}(\mathbf{x}(t_f)) + G_0(\mathbf{x}(t_f)) \right).
\end{aligned}\right.
\label{eqn:VectorThm}
\end{equation}\hfill\qed
\label{thm:main}
\end{theorem}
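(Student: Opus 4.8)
The plan is to assemble the claimed system in Expression \ref{eqn:VectorThm} directly from the first-order necessary conditions already derived, using the closed form of the optimal control to eliminate the co-state entirely. First I would record the two pieces that require no argument: the state equation $\dot{\mathbf{x}} = \mathbf{F}(\mathbf{x}) + \gamma\mathbf{G}(\mathbf{x})$ with $\mathbf{x}(0)=\mathbf{x}_0$ is simply the dynamic constraint of Expression \ref{eqn:main}, and the pointwise Hamiltonian minimizer from Lemma \ref{lem:NecessaryControl} supplies the closed form $\gamma = -\frac{1}{r}\left(\bm{\lambda}^T\mathbf{G}(\mathbf{x}) + G_0(\mathbf{x})\right)$ of Equation \ref{eqn:ClosedFormCoState}. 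These give the first and third lines of the system immediately.

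The substantive step is to produce an evolution equation for $\gamma$ that is free of the adjoint variable. I would differentiate Equation \ref{eqn:ClosedFormCoState} in $t$ along a trajectory to obtain $r\dot{\gamma} = -\dot{\bm{\lambda}}^T\mathbf{G}(\mathbf{x}) - \bm{\lambda}^T(D_\mathbf{x}\mathbf{G})\dot{\mathbf{x}} - (\nabla_\mathbf{x}G_0)^T\dot{\mathbf{x}}$, then substitute the adjoint dynamics \ref{eqn:costate} for $\dot{\bm{\lambda}}$ and the state equation for $\dot{\mathbf{x}}$. After collecting terms, every contribution linear in $\bm{\lambda}$ gathers into the single combination $\bm{\lambda}^T\left[(D_\mathbf{x}\mathbf{F})\mathbf{G} - (D_\mathbf{x}\mathbf{G})\mathbf{F}\right] = \bm{\lambda}^T[\mathbf{F},\mathbf{G}]$. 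This is the crux of the proof: the hypothesis $[\mathbf{F},\mathbf{G}]=\mathbf{0}$ of Equation \ref{eqn:LieBracketVanish} is precisely what annihilates the only remaining co-state-dependent term, leaving $r\dot{\gamma} = (\nabla_\mathbf{x}F_0)^T\mathbf{G}(\mathbf{x}) - (\nabla_\mathbf{x}G_0)^T\mathbf{F}(\mathbf{x})$, which is the second line of the target system. I would track the $\gamma$-proportional pieces (those multiplying $\nabla_\mathbf{x}G_0$ and $D_\mathbf{x}\mathbf{G}$) with care, since they cancel in pairs between the adjoint substitution and the $\dot{\mathbf{x}}$ substitution and must not be confused with genuine co-state terms.

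Finally, I would obtain the terminal condition by evaluating the closed form \ref{eqn:ClosedFormCoState} at $t=t_f$ and invoking the transversality condition $\bm{\lambda}(t_f) = \nabla_\mathbf{x}\Psi(\mathbf{x}(t_f))$ from the Euler–Lagrange system \ref{eqn:EulerLagrange}, which yields the fourth line and is exactly the content of the preceding Proposition. Strict convexity of the Hamiltonian in $\gamma$, noted in Lemma \ref{lem:NecessaryControl}, guarantees the stationary point is the minimizer, so no separate second-order check is needed to conclude that an optimal $\gamma$ solves this system.

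The main obstacle is conceptual rather than computational: recognizing that the adjoint can be purged at all. The raw Euler–Lagrange formulation couples $\mathbf{x}$ and $\bm{\lambda}$ in a $2n$-dimensional boundary value problem, and it is not obvious a priori that the co-state can be removed. The payoff of differentiating the control law is that $\bm{\lambda}$ survives only inside $\bm{\lambda}^T[\mathbf{F},\mathbf{G}]$, so the vanishing-bracket hypothesis collapses the system to dimension $n+1$ in the pair $(\mathbf{x},\gamma)$ alone. What remains is pure bookkeeping—verifying term by term that the $\gamma$-linear and $\bm{\lambda}$-linear groupings are correct during the simplification—rather than any delicate analytic estimate.
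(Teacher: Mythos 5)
Your proposal is correct and follows essentially the same route as the paper: differentiate the closed-form control law $\gamma = -\tfrac{1}{r}(\bm{\lambda}^T\mathbf{G}+G_0)$ along a trajectory, substitute the adjoint and state dynamics, observe that all co-state terms collect into $\bm{\lambda}^T[\mathbf{F},\mathbf{G}]$, which the vanishing-bracket hypothesis kills, and read the terminal condition off the transversality condition. No gaps; this is the paper's argument.
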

Geometrically, Equation \ref{eqn:LieBracketVanish} implies that the flows derived by the vector fields $\mathbf{F}$ and $\mathbf{G}$ commute locally. From a game-theoretic view, this means that locally evolutionary motion caused by competition in the uncontrolled game  commutes with evolutionary motion caused by the actuation payoffs on local space/time scales. As we see in the sequel, this is not true for actuated cyclic games, but is true for their quasi-linear approximations as in \cite{GB17}, meaning we can use Theorem \ref{thm:main} to determine properties of the optimal control near the interior fixed point. 

It is worth noting that a differential equation for the control function is derived in \cite{B78}, without the assumption of the vanishing Lie Bracket. However, without this assumption the system does not simplify in as useful a way and, in fact, in \cite{B78} the relevant Lie Bracket is not considered. Note, in formulating Theorem \ref{thm:main}, we are assuming that solving the Euler-Lagrange equations will yield an optimal control. We can use the well known fact that a sufficient condition for optimality is the boundedness of the solution to the matrix Ricatti equation \cite{BY65,J70} to derive complete necessary and sufficient conditions for optimality of the control. Let:
\begin{displaymath}
\dot{\mathbf{x}} = \mathbf{F}(\mathbf{x}) + \gamma\mathbf{G}(\mathbf{x}) = \mathbf{f}(\mathbf{x},\gamma).
\end{displaymath}
Then the Matrix Ricatti equation is:
\begin{equation}
\left\{
\begin{aligned}
-\dot{\mathbf{S}} &= D_\mathbf{xx}\mathcal{H} + (D_\mathbf{x}\mathbf{f})^T\mathbf{S} + \mathbf{S}(D_\mathbf{x}\mathbf{f}) - \\
& \hspace*{4em}\frac{1}{r}\left(D_{\gamma\mathbf{x}}\mathcal{H} + (\partial_\gamma\mathbf{f})^T\mathbf{S}\right)^T\left(D_{\gamma\mathbf{x}}\mathcal{H} + (\partial_\gamma\mathbf{f})^T\mathbf{S}\right),\\
\mathbf{S}(t_f) &= \nabla^2_\mathbf{x}\Psi(\mathbf{x}[t_f]).
\end{aligned}\right.
\label{eqn:matrixricatti}
\end{equation}
Here $D_\mathbf{xx}$ is the second order differential operator with respect to the state and $\partial_\gamma$ is an ordinary partial derivative, since there is only one control variable. When taken together with Lemma \ref{lem:NecessaryControl}, the system of differential equations in Theorem \ref{thm:main} and the co-state dynamics, Equation \ref{eqn:costate}, we have a complete characterization of the necessary and sufficient conditions for the optimal control. This yields the corollary:
\begin{corollary}[Corollary to Theorem \ref{thm:main}] Let $\mathbf{f}(\mathbf{x},\gamma) = \mathbf{F}(\mathbf{x}) + \gamma\mathbf{G}(\mathbf{x})$ in the optimal control problem in Expression \ref{eqn:main}, with Hamiltonian $\mathbf{H}(\mathbf{x},\gamma,\bm{\lambda})$. Assume $[\mathbf{F},\mathbf{G}] = \mathbf{0}$. Any solution to the system of differential equations:
\begin{equation}
\left\{
\begin{aligned}
\dot{\mathbf{x}} &= \mathbf{f}(\mathbf{x},\gamma),\\
\dot{\gamma} &=  \frac{1}{r}\left(  (\nabla_\mathbf{x}F_0(\mathbf{x}))^T\mathbf{G}(\mathbf{x}) - (\nabla_\mathbf{x}G_0(\mathbf{x}))^T\mathbf{F}(\mathbf{x})  \right),\\
\dot{\bm{\lambda}} &= -\nabla_\mathbf{x}F_0(\mathbf{x})-\gamma\nabla_\mathbf{x}G_0(\mathbf{x}) -(D_\mathbf{x}\mathbf{F})^T\bm{\lambda} - u(D_\mathbf{x}\mathbf{G})^T\bm{\lambda},\\
-\dot{\mathbf{S}} &= D_\mathbf{xx}\mathcal{H} + (D_\mathbf{x}\mathbf{f})^T\mathbf{S} + \mathbf{S}(D_\mathbf{x}\mathbf{f}) - \\
& \hspace*{4em}\frac{1}{r}\left(D_{\gamma\mathbf{x}}\mathcal{H} + (\partial_\gamma\mathbf{f})^T\mathbf{S}\right)^T\left(D_{\gamma\mathbf{x}}\mathcal{H} + (\partial_\gamma\mathbf{f})^T\mathbf{S}\right),\\
\mathbf{x}(0) &= \mathbf{x}_0,\\
\gamma(t_f) &=  -\frac{1}{r}\left(\nabla_{\mathbf{x}}\Psi(\mathbf{x}(t_f))^T\mathbf{G}(\mathbf{x}(t_f)) + G_0(\mathbf{x}(t_f)) \right),\\
\bm{\lambda}(t_f) &= \nabla_x\Psi(\mathbf{x}[t_f]),\\
\mathbf{S}(t_f) &= \nabla^2_\mathbf{x}\Psi(\mathbf{x}[t_f]),
\end{aligned}\right.
\end{equation}
in which $\gamma(t) = -\tfrac{1}{r}\left(\bm{\lambda}^T\mathbf{G}[\mathbf{x}] + G_0[\mathbf{x}]\right)$ and $\mathbf{S}$ is bounded for all $t \in [0,t_f]$ constitutes a weak local optimal solution for Expression \ref{eqn:main}.
\label{cor:main}
\end{corollary}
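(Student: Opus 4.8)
The plan is to recognize that the system displayed in the corollary is nothing more than the first-order Euler--Lagrange necessary conditions of Expression \ref{eqn:main} augmented by the classical second-order sufficiency certificate, namely boundedness of the solution to the accessory (matrix Riccati) equation. I would therefore identify each line of the system with its role in the standard theory and then invoke the sufficiency theorem of \cite{BY65,J70}. Concretely, the first equation is the state dynamics, the third equation is the co-state dynamics \ref{eqn:costate}, and the closed-form relation $\gamma(t) = -\tfrac{1}{r}\left(\bm{\lambda}^T\mathbf{G}[\mathbf{x}] + G_0[\mathbf{x}]\right)$ is precisely the stationarity condition $\mathcal{H}_\gamma = 0$ recorded in Equation \ref{eqn:ClosedFormCoState}.

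First I would verify the necessary conditions. By Lemma \ref{lem:NecessaryControl}, because $\mathcal{H}_{\gamma\gamma} = r > 0$ the candidate $\gamma$ is the unique pointwise minimizer of the Hamiltonian, so the strengthened Legendre--Clebsch condition holds automatically. The state and co-state equations, together with $\mathbf{x}(0) = \mathbf{x}_0$, the transversality condition $\bm{\lambda}(t_f) = \nabla_\mathbf{x}\Psi(\mathbf{x}[t_f])$, and the induced terminal value for $\gamma(t_f)$, reproduce exactly the two-point boundary value problem \ref{eqn:EulerLagrange}. The additional equation $\dot{\gamma} = \tfrac{1}{r}\left((\nabla_\mathbf{x}F_0)^T\mathbf{G} - (\nabla_\mathbf{x}G_0)^T\mathbf{F}\right)$ is, by Theorem \ref{thm:main} (and under the vanishing Lie bracket hypothesis inherited from it), simply the time derivative of the control relation once the co-state dynamics are substituted; hence it is automatically satisfied along any solution of the co-state equation and does not further constrain the solution set. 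Thus every solution of the listed system is a normal extremal of Expression \ref{eqn:main}.

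Next I would supply sufficiency. The remaining equation is the matrix Riccati equation \ref{eqn:matrixricatti} with terminal datum $\mathbf{S}(t_f) = \nabla^2_\mathbf{x}\Psi(\mathbf{x}[t_f])$, which governs the second-variation (accessory) problem about the extremal; its Schur-complement term $-\tfrac{1}{r}\left(D_{\gamma\mathbf{x}}\mathcal{H} + (\partial_\gamma\mathbf{f})^T\mathbf{S}\right)^T\left(D_{\gamma\mathbf{x}}\mathcal{H} + (\partial_\gamma\mathbf{f})^T\mathbf{S}\right)$ arises from minimizing the second variation over the control increment, the factor $1/r$ being $\mathcal{H}_{\gamma\gamma}^{-1}$. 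The classical result of \cite{BY65,J70} asserts that an extremal satisfying the strengthened Legendre--Clebsch condition is a weak local minimizer provided this Riccati equation admits a solution remaining bounded (equivalently, free of conjugate points) on all of $[0,t_f]$. Since boundedness of $\mathbf{S}$ on $[0,t_f]$ is taken as a hypothesis, every condition of that theorem is met and the extremal is a weak local optimal solution.

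I expect the main obstacle to be the bookkeeping certifying that the displayed Riccati equation is genuinely the accessory-problem equation for \ref{eqn:main}: one must expand the second variation of $\mathcal{H}$, eliminate the control increment using $\mathcal{H}_{\gamma\gamma} = r$, and check that the resulting feedback-corrected quadratic form matches \ref{eqn:matrixricatti} term by term. Everything else is a matter of matching the remaining equations to the standard necessary-condition template and noting that the strict convexity of $\mathcal{H}$ in $\gamma$, i.e.\ $\mathcal{H}_{\gamma\gamma} = r > 0$, is exactly what licenses both the pointwise minimization in Lemma \ref{lem:NecessaryControl} and the Schur-complement reduction in the accessory problem.
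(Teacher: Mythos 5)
Your proposal is correct and follows essentially the same route as the paper: the paper gives no separate proof, instead asserting in the preceding paragraph that the system is just Lemma \ref{lem:NecessaryControl}, the boundary value problem of Theorem \ref{thm:main}, and the co-state dynamics of Equation \ref{eqn:costate} assembled together, with boundedness of the matrix Riccati solution supplying sufficiency via \cite{BY65,J70}. Your identification of each line with its role in the standard necessary/sufficient-condition template, including the observation that the $\dot{\gamma}$ equation is redundant given the co-state dynamics and the vanishing Lie bracket, is exactly the intended argument.
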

We note that this is the general analog of Proposition 2 in \cite{FG17}, which is specialized to a control problem with one-dimensional state. In general, checking the boundedness of the solution to the Matrix Ricatti equation must be done numerically. In the sequel we develop a simpler test for optimality using Mangasarian's sufficiency condition; i.e., by checking that the Hamiltonian is jointly convex.

Problem \ref{eqn:MainControl} ($\mathcal{C}_N$) does not satisfy the necessary condition that $[\mathbf{F},\mathbf{G}] = \mathbf{0}$. However, a quasi-linearization of the problem does satisfy this condition (as in \cite{GB17}). We now discuss a special case of Theorem \ref{thm:main} as well as extensions that apply to this quasi-linearized form.

\subsection{The Quasi-Linear Case}
\label{sec:linear}
For the remainder of this section, let $\mathbf{J}$ and $\mathbf{H}$ be arbitrary matrices of appropriate size, rather than the Jacobian matrices derived in Lemmas \ref{lem:DF} and \ref{lem:DG}.

In Problem \ref{eqn:main}, let:
\begin{equation}
\left\{
\begin{aligned}
F_0(\mathbf{x}) &= \frac{1}{2}\mathbf{x}^T\mathbf{Q}\mathbf{x},\\
G_0(\mathbf{x}) &= 0,\\
\mathbf{F}(\mathbf{x}) &= \mathbf{J}\mathbf{x},\\
\mathbf{G}(\mathbf{x}) &= \mathbf{H}\mathbf{x}.
\end{aligned}
\right.
\label{eqn:LinearConditions}
\end{equation}
where $\mathbf{Q}$ is a (symmetric) positive definite matrix of appropriate size. We will add additional criteria to $\mathbf{J}$ and $\mathbf{H}$ as we proceed. We refer to this as a quasi-linear case because the only non-linearity arises from the interaction of the state and control variables. The following Corollary is immediate from Theorem \ref{thm:main}:
\begin{corollary} If the identify $\mathbf{J}\mathbf{H} = \mathbf{H}\mathbf{J}$ holds and $\gamma^*$ is an optimal control, then the pair $(\mathbf{x}(\gamma^*), \gamma^*)$ is a solution of the two point boundary value problem:
\begin{equation}
\left\{
\begin{aligned}
\dot{\mathbf{x}} = &\mathbf{J}\mathbf{x} + \gamma\mathbf{H}\mathbf{x},\\
\dot{\gamma} = & \frac{1}{r}\mathbf{x}^T\mathbf{Q}\mathbf{H}\mathbf{x},\\
\mathbf{x}(0) =& \mathbf{x}_0,\\
\gamma(t_f) = & -\frac{1}{r}\nabla_{\mathbf{x}}\Psi(\mathbf{x}(t_f))^T\mathbf{H}\mathbf{x}(t_f).
\end{aligned}\right.
\end{equation}
\hfill\qed
\label{cor:LinRes1}
\end{corollary}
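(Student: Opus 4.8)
The plan is to obtain the stated two-point boundary value problem by direct substitution of the quasi-linear data in Expression \ref{eqn:LinearConditions} into the four equations of Theorem \ref{thm:main}, after first verifying that the hypothesis $[\mathbf{F},\mathbf{G}] = \mathbf{0}$ of that theorem is precisely the commutativity assumption $\mathbf{J}\mathbf{H} = \mathbf{H}\mathbf{J}$. For the linear vector fields $\mathbf{F}(\mathbf{x}) = \mathbf{J}\mathbf{x}$ and $\mathbf{G}(\mathbf{x}) = \mathbf{H}\mathbf{x}$ the Jacobians are constant, $D_\mathbf{x}\mathbf{F} = \mathbf{J}$ and $D_\mathbf{x}\mathbf{G} = \mathbf{H}$, so Equation \ref{eqn:LieBracketVanish} gives $[\mathbf{F},\mathbf{G}] = \mathbf{J}\mathbf{H}\mathbf{x} - \mathbf{H}\mathbf{J}\mathbf{x} = (\mathbf{J}\mathbf{H} - \mathbf{H}\mathbf{J})\mathbf{x}$. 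This vanishes identically in $\mathbf{x}$ if and only if $\mathbf{J}\mathbf{H} = \mathbf{H}\mathbf{J}$, so the hypothesis of Theorem \ref{thm:main} is met exactly under the Corollary's assumption and we may invoke its conclusion.

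It then remains to evaluate each component of system \ref{eqn:VectorThm}. The state equation $\dot{\mathbf{x}} = \mathbf{F}(\mathbf{x}) + \gamma\mathbf{G}(\mathbf{x})$ becomes $\dot{\mathbf{x}} = \mathbf{J}\mathbf{x} + \gamma\mathbf{H}\mathbf{x}$ immediately, and the initial condition $\mathbf{x}(0) = \mathbf{x}_0$ is unchanged. For the control dynamics I would compute the two gradients appearing in the $\dot{\gamma}$ equation: since $F_0(\mathbf{x}) = \tfrac{1}{2}\mathbf{x}^T\mathbf{Q}\mathbf{x}$ with $\mathbf{Q}$ symmetric, $\nabla_\mathbf{x}F_0(\mathbf{x}) = \mathbf{Q}\mathbf{x}$, whence $(\nabla_\mathbf{x}F_0(\mathbf{x}))^T\mathbf{G}(\mathbf{x}) = \mathbf{x}^T\mathbf{Q}\mathbf{H}\mathbf{x}$; and since $G_0 \equiv 0$ we have $\nabla_\mathbf{x}G_0 = \mathbf{0}$, which kills the second term. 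Hence $\dot{\gamma} = \tfrac{1}{r}\mathbf{x}^T\mathbf{Q}\mathbf{H}\mathbf{x}$, as claimed.

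Finally, for the terminal (transversality) condition I substitute $G_0(\mathbf{x}(t_f)) = 0$ and $\mathbf{G}(\mathbf{x}(t_f)) = \mathbf{H}\mathbf{x}(t_f)$ into the last line of Equation \ref{eqn:VectorThm}, obtaining $\gamma(t_f) = -\tfrac{1}{r}\nabla_\mathbf{x}\Psi(\mathbf{x}(t_f))^T\mathbf{H}\mathbf{x}(t_f)$. This exhausts the list of four equations in the Corollary. Because every step is either a substitution or an elementary gradient computation, there is no genuine obstacle here; the only points requiring a moment of care are (i) recognizing that the vanishing-Lie-bracket hypothesis collapses to matrix commutativity in the linear case, and (ii) using the symmetry of $\mathbf{Q}$ so that $\nabla_\mathbf{x}(\tfrac{1}{2}\mathbf{x}^T\mathbf{Q}\mathbf{x}) = \mathbf{Q}\mathbf{x}$ rather than $\tfrac{1}{2}(\mathbf{Q} + \mathbf{Q}^T)\mathbf{x}$.
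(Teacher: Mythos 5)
Your proof is correct and matches the paper's approach exactly: the paper states the corollary as ``immediate from Theorem \ref{thm:main}'' and notes afterward that $\mathbf{J}\mathbf{H}=\mathbf{H}\mathbf{J}$ is precisely the vanishing of the Lie bracket for the linear fields, which is the same substitution argument you carry out in detail. Nothing further is needed.
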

The condition that $\mathbf{J}$ and $\mathbf{H}$ commute is exactly the statement that the Lie Bracket of the vector fields in the dynamics vanishes. Therefore, Theorem \ref{thm:main} can be applied to any linear quadratic control problem where the state equation satisfies this condition.

We now derive some special results on $\ddot{\gamma}$ and the optimal control in this quasi-linear case. Let $\mathbf{K} \triangleq \mathbf{Q}\mathbf{H}$ and assume $\mathbf{J}\mathbf{H} = \mathbf{H}\mathbf{J}$. Computing the second derivative of $\gamma$ yields:
\begin{multline*}
r \ddot{\gamma} = \dot{\mathbf{x}}^T\mathbf{K}\mathbf{x} + \mathbf{x}^T\mathbf{K}\dot{\mathbf{x}} =
\left(\mathbf{x}^T\mathbf{J}^T + \gamma\mathbf{x}^T\mathbf{H}^T\right)\mathbf{K}\mathbf{x} + \mathbf{x}^T\mathbf{K}\left(\mathbf{J}\mathbf{x} + \gamma \mathbf{H}\mathbf{x}\right) = \\
\mathbf{x}^T\left(\mathbf{J}^T\mathbf{K} + \mathbf{K}\mathbf{J} + \gamma\left( \mathbf{H}^T\mathbf{K} + \mathbf{K}\mathbf{H} \right)
\right)\mathbf{x}.
\end{multline*}
To simplify this, we will add an additional assumption to $\mathbf{J}$; suppose that $\mathbf{J}^T = -\mathbf{J}$ (i.e., $\mathbf{J}$ is skew-symmetric) and $\mathbf{J}\mathbf{K} = \mathbf{K}\mathbf{J}$. Then:
\begin{equation}
r \frac{\ddot{\gamma}}{\gamma} = \mathbf{x}^T\left( \mathbf{H}^T\mathbf{K} + \mathbf{K}\mathbf{H} \right)\mathbf{x}.
\end{equation}

Before proceeding note that:
\begin{align*}
\frac{d}{dt}\left(\mathbf{x}^T\mathbf{Q}\mathbf{x}\right) = &
\left(\mathbf{x}^T\mathbf{J}^T + \gamma\mathbf{x}^T\mathbf{H}^T \right)\mathbf{Q}\mathbf{x} + \mathbf{x}^T\mathbf{Q}\left(\mathbf{J}\mathbf{x} + \gamma\mathbf{H}\mathbf{x}\right) = \\
&\mathbf{x}^T\left(-\mathbf{J}\mathbf{Q} + \mathbf{Q}\mathbf{J}\right)\mathbf{x} + \gamma\mathbf{x}^T\left(\mathbf{H}^T\mathbf{Q} + \mathbf{Q}\mathbf{H}\right)\mathbf{x} =\\
&\mathbf{x}^T\left(-\mathbf{J}\mathbf{Q} + \mathbf{Q}\mathbf{J}\right)\mathbf{x} + \gamma \mathbf{x}^T\left(\mathbf{H}^T\mathbf{Q}^T + \mathbf{Q}\mathbf{H}\right)\mathbf{x} = \\
&\mathbf{x}^T\left(-\mathbf{J}\mathbf{Q} + \mathbf{Q}\mathbf{J}\right)\mathbf{x} + 2 \gamma \mathbf{x}^T\mathbf{K}\mathbf{x} = \\
&\mathbf{x}^T\left(-\mathbf{J}\mathbf{Q} + \mathbf{Q}\mathbf{J}\right)\mathbf{x} + 2 r\gamma \dot{\gamma}.
\end{align*}
Thus, we have the following proposition and its corollary:
\begin{proposition} If $\mathbf{J} = -\mathbf{J}^T$, $\mathbf{J}\mathbf{H} = \mathbf{H}\mathbf{J}$ and $\mathbf{J}\mathbf{Q} = \mathbf{Q}\mathbf{J}$ and $\mathbf{K} \triangleq \mathbf{Q}\mathbf{H}$, then:
\begin{enumerate}
\item \begin{equation}\nonumber
\mathbf{J}\mathbf{K} = \mathbf{K}\mathbf{J},
\end{equation}
\item
\begin{equation}
\frac{d}{dt}\left(\mathbf{x}^T\mathbf{Q}\mathbf{x}\right) = 2r\gamma \dot{\gamma},
\end{equation}
\item
\begin{equation}
r \frac{\ddot{\gamma}}{\gamma} = \mathbf{x}^T\left(\mathbf{H}^T\mathbf{K} + \mathbf{K}\mathbf{H}\right)\mathbf{x}.
\end{equation}
\end{enumerate}
\label{prop:VectorProp}
\end{proposition}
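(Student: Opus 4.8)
The plan is to exploit the fact that the two displayed computations immediately preceding the proposition have already done essentially all of the algebra; what remains is to feed the three structural hypotheses ($\mathbf{J}^T=-\mathbf{J}$, $\mathbf{J}\mathbf{H}=\mathbf{H}\mathbf{J}$, $\mathbf{J}\mathbf{Q}=\mathbf{Q}\mathbf{J}$) into those identities and watch the ``extra'' terms cancel. I would prove the three parts in the order $1\to 2\to 3$, since Part 3 relies on Part 1.

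First I would dispatch Part 1 by a one-line substitution: since $\mathbf{K}=\mathbf{Q}\mathbf{H}$, we have $\mathbf{J}\mathbf{K}=\mathbf{J}\mathbf{Q}\mathbf{H}=\mathbf{Q}\mathbf{J}\mathbf{H}=\mathbf{Q}\mathbf{H}\mathbf{J}=\mathbf{K}\mathbf{J}$, where the middle equalities use $\mathbf{J}\mathbf{Q}=\mathbf{Q}\mathbf{J}$ and then $\mathbf{J}\mathbf{H}=\mathbf{H}\mathbf{J}$. This is purely algebraic and carries no obstruction. For Part 2, I would start from the last line of the chain of equalities already computed for $\tfrac{d}{dt}(\mathbf{x}^T\mathbf{Q}\mathbf{x})$, namely $\tfrac{d}{dt}(\mathbf{x}^T\mathbf{Q}\mathbf{x})=\mathbf{x}^T(-\mathbf{J}\mathbf{Q}+\mathbf{Q}\mathbf{J})\mathbf{x}+2r\gamma\dot{\gamma}$, and observe that the commutation hypothesis $\mathbf{J}\mathbf{Q}=\mathbf{Q}\mathbf{J}$ makes $-\mathbf{J}\mathbf{Q}+\mathbf{Q}\mathbf{J}$ the zero matrix, so the quadratic term vanishes identically and only $2r\gamma\dot{\gamma}$ survives. (Here I would also recall that $\dot{\gamma}=\tfrac{1}{r}\mathbf{x}^T\mathbf{K}\mathbf{x}$ from Corollary~\ref{cor:LinRes1} to justify the $2r\gamma\dot\gamma$ bookkeeping in that precomputed line, using $\mathbf{Q}^T=\mathbf{Q}$ to write $\mathbf{H}^T\mathbf{Q}+\mathbf{Q}\mathbf{H}=\mathbf{K}^T+\mathbf{K}$ and hence $\mathbf{x}^T(\mathbf{H}^T\mathbf{Q}+\mathbf{Q}\mathbf{H})\mathbf{x}=2\mathbf{x}^T\mathbf{K}\mathbf{x}=2r\dot\gamma$.)

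For Part 3 I would begin from the expression already obtained, $r\ddot{\gamma}=\mathbf{x}^T\!\left(\mathbf{J}^T\mathbf{K}+\mathbf{K}\mathbf{J}+\gamma(\mathbf{H}^T\mathbf{K}+\mathbf{K}\mathbf{H})\right)\!\mathbf{x}$, and kill the first two terms. Using skew-symmetry $\mathbf{J}^T=-\mathbf{J}$ gives $\mathbf{J}^T\mathbf{K}+\mathbf{K}\mathbf{J}=-\mathbf{J}\mathbf{K}+\mathbf{K}\mathbf{J}$, and Part 1 then shows this equals $-\mathbf{K}\mathbf{J}+\mathbf{K}\mathbf{J}=\mathbf{0}$ as a matrix. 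Dividing by $\gamma$ leaves exactly $r\ddot{\gamma}/\gamma=\mathbf{x}^T(\mathbf{H}^T\mathbf{K}+\mathbf{K}\mathbf{H})\mathbf{x}$.

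There is no genuine analytic obstacle here; the proof is bookkeeping. The one step worth flagging as the crux is the cancellation in Part 3: it is important that $\mathbf{J}^T\mathbf{K}+\mathbf{K}\mathbf{J}$ vanishes as an operator (which requires Part 1, i.e.\ that $\mathbf{J}$ commutes with $\mathbf{K}$, not merely with $\mathbf{H}$ and $\mathbf{Q}$ separately), rather than relying on a quadratic-form identity. The conceptual content is that skew-symmetry of $\mathbf{J}$ together with the two commutation relations conspire precisely so that the uncontrolled flow $\mathbf{J}\mathbf{x}$ contributes nothing to either $\tfrac{d}{dt}(\mathbf{x}^T\mathbf{Q}\mathbf{x})$ or $\ddot{\gamma}$, which is what decouples the control dynamics from the co-state.
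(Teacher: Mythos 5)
Your proposal is correct and follows exactly the route the paper takes: the paper's ``proof'' of Proposition~\ref{prop:VectorProp} is precisely the two displayed computations preceding it, and you supply the same substitutions ($\mathbf{J}\mathbf{K}=\mathbf{J}\mathbf{Q}\mathbf{H}=\mathbf{Q}\mathbf{J}\mathbf{H}=\mathbf{Q}\mathbf{H}\mathbf{J}=\mathbf{K}\mathbf{J}$, the vanishing of $-\mathbf{J}\mathbf{Q}+\mathbf{Q}\mathbf{J}$, and the operator-level cancellation $\mathbf{J}^T\mathbf{K}+\mathbf{K}\mathbf{J}=-\mathbf{J}\mathbf{K}+\mathbf{K}\mathbf{J}=\mathbf{0}$) that the paper uses, correctly deriving $\mathbf{J}\mathbf{K}=\mathbf{K}\mathbf{J}$ from the stated hypotheses rather than assuming it separately as the preceding text does. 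No gaps.
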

\begin{corollary} For some constant $C$,
\begin{equation}
\mathbf{x}^T\mathbf{Q}\mathbf{x} = r\gamma^2 + C
\end{equation}
is the implicit closed-loop control, where $C$ must satisfy:
\begin{equation}
C = \mathbf{x}^T(t_f)\mathbf{Q}\mathbf{x}(t_f) - r\left(\frac{1}{r}\nabla_{\mathbf{x}}\Psi(\mathbf{x}(t_f))^T\mathbf{H}\mathbf{x}(t_f) \right)^2.
\label{eqn:C2}
\end{equation}
Furthermore the optimal control $\gamma$ exists at time $t$ just in case:
\begin{equation}
\mathbf{x}^T(t)\mathbf{Q}\mathbf{x}(t) - C \geq 0.
\end{equation}
\label{cor:xQx}
\end{corollary}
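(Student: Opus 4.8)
The plan is to recognize that the right-hand side of Proposition \ref{prop:VectorProp}(2) is itself a total time derivative, which immediately produces a conserved quantity and hence the implicit closed-loop relation. First I would observe that
\[
2r\gamma\dot{\gamma} = \frac{d}{dt}\left(r\gamma^2\right),
\]
so that the identity $\tfrac{d}{dt}\left(\mathbf{x}^T\mathbf{Q}\mathbf{x}\right) = 2r\gamma\dot{\gamma}$ from Proposition \ref{prop:VectorProp} can be rewritten as
\[
\frac{d}{dt}\left(\mathbf{x}^T\mathbf{Q}\mathbf{x} - r\gamma^2\right) = 0.
\]
Integrating in time shows that $\mathbf{x}^T\mathbf{Q}\mathbf{x} - r\gamma^2$ is constant along any optimal trajectory, which is precisely the claimed relation $\mathbf{x}^T\mathbf{Q}\mathbf{x} = r\gamma^2 + C$. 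Note that this step uses all three hypotheses of Proposition \ref{prop:VectorProp} (skew-symmetry of $\mathbf{J}$ and the two commutativity conditions) only indirectly, through the clean form of part (2).

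To pin down $C$, I would evaluate the conserved quantity at the terminal time $t_f$ and invoke the transversality condition supplied by Corollary \ref{cor:LinRes1}, namely $\gamma(t_f) = -\tfrac{1}{r}\nabla_{\mathbf{x}}\Psi(\mathbf{x}(t_f))^T\mathbf{H}\mathbf{x}(t_f)$. Squaring this boundary value and substituting into $C = \mathbf{x}^T(t_f)\mathbf{Q}\mathbf{x}(t_f) - r\gamma(t_f)^2$ yields Equation \ref{eqn:C2} directly, with the outer factor of $r$ cancelling one of the two factors of $1/r$ hidden in the squared transversality term.

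Finally, for the existence criterion I would solve the implicit relation for the control, obtaining $\gamma^2 = \tfrac{1}{r}\left(\mathbf{x}^T\mathbf{Q}\mathbf{x} - C\right)$. Since $r > 0$, a real-valued control can be recovered at time $t$ exactly when the right-hand side is nonnegative, i.e.\ when $\mathbf{x}^T(t)\mathbf{Q}\mathbf{x}(t) - C \geq 0$.

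I expect the only delicate point to be the recovery of $\gamma$ itself rather than $\gamma^2$: the square root introduces a sign ambiguity, so the implicit relation determines $|\gamma(t)|$ but not its sign without appealing to continuity of $\gamma$ together with the terminal value as initial data for the branch selection. The genuine content of the corollary is therefore the conservation law and the resulting feasibility window $\mathbf{x}^T\mathbf{Q}\mathbf{x} \geq C$; once Proposition \ref{prop:VectorProp} is in hand, the remaining work is routine boundary-value bookkeeping and poses no real obstacle.
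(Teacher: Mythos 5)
Your proposal is correct and follows essentially the same route as the paper: the identity $\tfrac{d}{dt}\left(\mathbf{x}^T\mathbf{Q}\mathbf{x}\right) = 2r\gamma\dot{\gamma}$ from Proposition \ref{prop:VectorProp} is integrated to give the conserved quantity, $C$ is fixed by the terminal condition on $\gamma$ from Corollary \ref{cor:LinRes1}, and the feasibility condition comes from requiring $\gamma^2 = \tfrac{1}{r}\left(\mathbf{x}^T\mathbf{Q}\mathbf{x} - C\right) \geq 0$. Your closing remark about the sign ambiguity in recovering $\gamma$ from $\gamma^2$ is a fair observation that the paper leaves implicit.
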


\section{Application of Control Results to Complete Odd Circulant Games}\label{sec:CyclicControl}
We now return to the study of cyclic games with $N$ strategies and specifically to the control problem in Expression \ref{eqn:MainControl}. As noted already, we cannot apply Theorem \ref{thm:main} directly to Problem \ref{eqn:MainControl} because the appropriate Lie Bracket does not vanish. However, we can construct the quasi-linearized form of the problem. Let $\mathbf{x} = \mathbf{u} - \mathbf{u}^*$. The quasi-linearized problem is:
\begin{equation}
\tilde{\mathcal{C}}_N = \left\{
\begin{aligned}
\min \;\; & \int_0^{t_f}\;\frac{1}{2}\norm{\mathbf{x}}^2 + \frac{r}{2}\gamma^2\;\;dt\\
s.t.\;\; &\dot{\mathbf{x}} = \mathbf{J}\mathbf{x} + \gamma\mathbf{H}\mathbf{x},\\
&\mathbf{x}(0) = \mathbf{x}_0.
\end{aligned}\right.
\label{eqn:MainControlQuasi}
\end{equation}
In Expression \ref{eqn:MainControlQuasi}, $\mathbf{J}$ and $\mathbf{H}$ are the Jacobian matrices of $\mathbf{F}(\mathbf{u})$ and $\mathbf{G}(\mathbf{u})$ as defined in Lemmas \ref{lem:DF} and \ref{lem:DG}.  Problem \ref{eqn:MainControlQuasi} is an instance of the general control problem studied in Section \ref{sec:GeneralControl}.

The following useful fact follows at once from Lemma \ref{lem:DF}.
\begin{corollary} The Jacobian matrix $\mathbf{J}$ is skew-symmetric. \hfill\qed
\label{cor:SkewSymmetric}
\end{corollary}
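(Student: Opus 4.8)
The plan is to reduce everything to the explicit formula $\mathbf{J} = \frac{1}{N}\mathbf{L}_N$ furnished by Lemma \ref{lem:DF}. Since skew-symmetry is preserved under scaling by the positive constant $\frac{1}{N}$, it suffices to show that $\mathbf{L}_N$ is skew-symmetric, i.e. $\mathbf{L}_N^T = -\mathbf{L}_N$, equivalently $\mathbf{L}_{N_{ij}} = -\mathbf{L}_{N_{ji}}$ for all $i,j$. First I would dispose of the diagonal: by definition $\mathbf{L}_{N_{ii}} = 0$, which is consistent with skew-symmetry, so only the off-diagonal entries require work.

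For the off-diagonal entries I would exploit the fact, already noted in the text, that $\mathbf{L}_N$ is circulant. The proof of Lemma \ref{lem:DF} computes its first row explicitly, namely $\mathbf{L}_{N_{1,j}} = (-1)^{j-1}$ for $j = 2,\dots,N$ and $\mathbf{L}_{N_{1,1}} = 0$. Introducing the circulant symbol $c_k = \mathbf{L}_{N_{1,k+1}}$, so that $c_0 = 0$ and $c_k = (-1)^k$ for $k = 1,\dots,N-1$, the circulant structure gives $\mathbf{L}_{N_{ij}} = c_{(j-i)\bmod N}$ for every $i,j$.

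The core step is a short parity computation. For a circulant matrix, skew-symmetry is equivalent to the symbol identity $c_k = -c_{N-k}$ for each $k \in \{1,\dots,N-1\}$, since $(i-j) \equiv N-(j-i) \pmod N$. Because $N = 2n+1$ is odd, $(-1)^{N-k} = (-1)^N(-1)^{-k} = -(-1)^k$, and hence $c_{N-k} = -c_k$. This is precisely where the oddness of $N$ is essential: for even $N$ the same computation would instead make $\mathbf{L}_N$ symmetric. Combining the diagonal case with this identity yields $\mathbf{L}_{N_{ij}} = -\mathbf{L}_{N_{ji}}$ for all $i,j$, hence $\mathbf{L}_N^T = -\mathbf{L}_N$ and therefore $\mathbf{J}^T = -\mathbf{J}$.

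I anticipate no serious obstacle; the result genuinely follows at once from Lemma \ref{lem:DF}. The only point requiring care is the bookkeeping of the circulant indexing and making explicit that the sign flip $(-1)^{N-k} = -(-1)^k$ hinges on $N$ being odd, which is the standing hypothesis throughout the paper.
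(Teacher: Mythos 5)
Your proof is correct and matches the paper's (implicit) argument: the paper states the corollary as following at once from Lemma \ref{lem:DF}, i.e. from $\mathbf{J} = \tfrac{1}{N}\mathbf{L}_N$ together with the skew-symmetry of $\mathbf{L}_N$, which is exactly what you verify via the circulant symbol $c_k = (-1)^k$ and the parity identity $(-1)^{N-k} = -(-1)^k$ for odd $N$. You have simply supplied the details the paper leaves to the reader.
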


\begin{lemma} Let $\gamma$ be the optimal control for Problem \ref{eqn:MainControlQuasi}. Then:
\begin{enumerate}
\item The (open-loop) optimal control obeys the following differential equations:
\begin{align}
\dot{\gamma} &= \frac{1}{r}\mathbf{x}^T\mathbf{H}\mathbf{x},\; \gamma(t_f) = 0, \label{eqn:dotgamma}\\
r\frac{\ddot{\gamma}}{\gamma} &= \mathbf{x}^T\left(\mathbf{H}^T\mathbf{H} + \mathbf{H}\mathbf{H}\right)\mathbf{x}. \label{eqn:ddotgamma1}
\end{align}
\item The following identity holds:
\begin{equation}
\mathbf{x}^T\mathbf{x} = \norm{x}^2 = r\gamma^2 + C,
\label{eqn:closedloop}
\end{equation}
where:
\begin{displaymath}
C = \norm{\mathbf{x}(t_f)}^2.
\end{displaymath}
\end{enumerate}
\label{lem:MainControl1}
\end{lemma}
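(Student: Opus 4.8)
The plan is to recognize that the quasi-linearized problem $\tilde{\mathcal{C}}_N$ (Expression \ref{eqn:MainControlQuasi}) is precisely the quasi-linear instance of the general Problem \ref{eqn:main} analyzed in Section \ref{sec:linear}, obtained by setting $\mathbf{Q} = \mathbf{I}$, $G_0(\mathbf{x}) = 0$, $\Psi \equiv 0$, with $\mathbf{F}(\mathbf{x}) = \mathbf{J}\mathbf{x}$ and $\mathbf{G}(\mathbf{x}) = \mathbf{H}\mathbf{x}$ for the Jacobians of Lemmas \ref{lem:DF} and \ref{lem:DG}. Once this identification is in place, every claim of the lemma follows by specializing Corollary \ref{cor:LinRes1}, Proposition \ref{prop:VectorProp}, and Corollary \ref{cor:xQx}; the argument requires no computation beyond substituting $\mathbf{Q} = \mathbf{I}$ and discarding the vanishing terminal-cost terms.

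First I would verify the three structural hypotheses those results demand. The commutativity $\mathbf{J}\mathbf{H} = \mathbf{H}\mathbf{J}$ holds because, by Lemmas \ref{lem:DF} and \ref{lem:DG}, both $\mathbf{J}$ and $\mathbf{H}$ are circulant matrices and the $N \times N$ circulant matrices form a commutative algebra, as recorded in Section \ref{sec:Prelim}. The skew-symmetry $\mathbf{J} = -\mathbf{J}^T$ is exactly Corollary \ref{cor:SkewSymmetric}. Finally $\mathbf{J}\mathbf{Q} = \mathbf{Q}\mathbf{J}$ is automatic since $\mathbf{Q} = \mathbf{I}$. Setting $\mathbf{K} \triangleq \mathbf{Q}\mathbf{H} = \mathbf{H}$, all hypotheses of Proposition \ref{prop:VectorProp} are then in force.

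Next I would read off the conclusions. Corollary \ref{cor:LinRes1} gives $\dot{\gamma} = \tfrac{1}{r}\mathbf{x}^T\mathbf{Q}\mathbf{H}\mathbf{x} = \tfrac{1}{r}\mathbf{x}^T\mathbf{H}\mathbf{x}$, which is Equation \ref{eqn:dotgamma}, while its terminal condition $\gamma(t_f) = -\tfrac{1}{r}\nabla_\mathbf{x}\Psi(\mathbf{x}(t_f))^T\mathbf{H}\mathbf{x}(t_f)$ collapses to $\gamma(t_f) = 0$ because $\Psi \equiv 0$. Statement 3 of Proposition \ref{prop:VectorProp}, with $\mathbf{K} = \mathbf{H}$, yields $r\tfrac{\ddot{\gamma}}{\gamma} = \mathbf{x}^T(\mathbf{H}^T\mathbf{H} + \mathbf{H}\mathbf{H})\mathbf{x}$, which is Equation \ref{eqn:ddotgamma1}. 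For Statement 2, Corollary \ref{cor:xQx} gives $\mathbf{x}^T\mathbf{Q}\mathbf{x} = r\gamma^2 + C$; substituting $\mathbf{Q} = \mathbf{I}$ produces $\|\mathbf{x}\|^2 = r\gamma^2 + C$, and the formula \ref{eqn:C2} for $C$ reduces to $C = \mathbf{x}^T(t_f)\mathbf{x}(t_f) = \|\mathbf{x}(t_f)\|^2$ once the $\Psi$-term vanishes.

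Because the whole proof is a specialization, there is no genuine analytic obstacle; the only point requiring care is confirming that the hypotheses of Proposition \ref{prop:VectorProp} legitimately hold — in particular that $\mathbf{J}$ and $\mathbf{H}$ commute — which rests entirely on the circulant structure established in Lemmas \ref{lem:DF} and \ref{lem:DG}. I would also flag, for precision, that although the lemma is phrased in terms of the optimal control of Problem \ref{eqn:MainControl}, the differential equations involve the quasi-linear data $\mathbf{x}$, $\mathbf{J}$, and $\mathbf{H}$, so the statement is implicitly about the quasi-linearized problem $\tilde{\mathcal{C}}_N$ and the proof is to be understood in that setting.
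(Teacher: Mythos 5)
Your proposal is correct and follows essentially the same route as the paper: identify $\tilde{\mathcal{C}}_N$ as the quasi-linear instance of Problem \ref{eqn:main} with $\mathbf{Q} = \mathbf{I}_N$ (so $\mathbf{K} = \mathbf{H}$), verify skew-symmetry of $\mathbf{J}$ via Corollary \ref{cor:SkewSymmetric} and commutativity $\mathbf{J}\mathbf{H} = \mathbf{H}\mathbf{J}$ from the circulant algebra, and then specialize Corollary \ref{cor:LinRes1}, Proposition \ref{prop:VectorProp}, and Corollary \ref{cor:xQx}. Your explicit tracing of each conclusion through the specialization, and your remark that the lemma is really about the quasi-linearized problem $\tilde{\mathcal{C}}_N$, are both accurate and slightly more detailed than the paper's own one-paragraph proof.
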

\begin{proof}
Problem \ref{eqn:MainControlQuasi} is an instance of Problem \ref{eqn:main}, but with quasi-linear system dynamics and quadratic objective as given in the quasi-linear conditions in Expression \ref{eqn:LinearConditions}. In particular, Problem \ref{eqn:MainControlQuasi} sets $\mathbf{Q} = \mathbf{I}_N$. As a consequence the matrix $\mathbf{K} = \mathbf{Q}\mathbf{H} = \mathbf{H}$. From Corollary \ref{cor:SkewSymmetric}, we know $\mathbf{J}$ is skew-symmetric. Further, since the circulant matrices form a commutative algebra, we have $\mathbf{H}\mathbf{J} = \mathbf{J}\mathbf{H}$. The lemma follows at once from Corollary \ref{cor:LinRes1}, Proposition \ref{prop:VectorProp} and Corollary \ref{cor:xQx}.
\end{proof}
Expression \ref{eqn:closedloop} is the closed-loop control law for the controlled cyclic game. Furthermore, Equation \ref{eqn:dotgamma} allows us to determine some structural properties of $\gamma$.

\begin{proposition} The matrix $\mathbf{H}$ is negative definite and therefore $\dot{\gamma} \leq 0$ for all $t$.
\label{prop:DotGammaSign}
\end{proposition}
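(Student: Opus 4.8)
The plan is to show that the quadratic form $\mathbf{x}\mapsto\mathbf{x}^T\mathbf{H}\mathbf{x}$ is strictly negative for every nonzero $\mathbf{x}$. Since Equation \ref{eqn:dotgamma} gives $\dot{\gamma}=\tfrac{1}{r}\mathbf{x}^T\mathbf{H}\mathbf{x}$ with $r>0$, the sign claim $\dot{\gamma}\leq 0$ then follows at once (with strict inequality whenever $\mathbf{x}\neq\mathbf{0}$). Because $\mathbf{H}$ is not symmetric, only its symmetric part contributes to the quadratic form, so the first reduction is to prove that $\mathbf{H}+\mathbf{H}^T$ is negative definite.

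The key structural input is the tournament identity $\mathbf{M}_N+\mathbf{M}_N^T=\mathbf{1}_N-\mathbf{I}_N$, which I would obtain from Corollary \ref{cor:SkewSymmetric}. Since $\mathbf{J}=\tfrac{1}{N}\mathbf{L}_N$ is skew-symmetric, so is $\mathbf{L}_N$; hence for $i\neq j$ the off-diagonal entries satisfy $\mathbf{L}_{N_{ij}}=-\mathbf{L}_{N_{ji}}$. As these entries are $\pm 1$, exactly one of each such pair equals $+1$, and since $\mathbf{M}_N$ marks precisely the $+1$ positions of $\mathbf{L}_N$ with a zero diagonal, we get $\mathbf{M}_{N_{ij}}+\mathbf{M}_{N_{ji}}=1$ for $i\neq j$ and $0$ on the diagonal. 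Game-theoretically this merely records that every pair of distinct strategies has exactly one winner. Using the skew-symmetry already established is the step that avoids grinding through the $\mu$/$(-1)$ definition of $\mathbf{M}_N$ directly, and I expect this identity to be the main obstacle; everything after it is routine linear algebra.

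Next I would substitute the closed form $\mathbf{H}=\tfrac{1}{N^2}\left(N(\mathbf{M}_N-\mathbf{1}_N)+\mathbf{1}_N\right)$ from Lemma \ref{lem:DG}. Using $\mathbf{1}_N^T=\mathbf{1}_N$ together with the tournament identity, a short computation yields
\begin{equation}
\mathbf{H}+\mathbf{H}^T=\frac{1}{N^2}\left((2-N)\mathbf{1}_N-N\mathbf{I}_N\right).\nonumber
\end{equation}
Finally I would diagonalize using the rank-one structure of $\mathbf{1}_N$, which has eigenvalue $N$ on $\mathrm{span}(\mathbf{1})$ and eigenvalue $0$ on $\mathbf{1}^\perp$. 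The eigenvalues of $\mathbf{H}+\mathbf{H}^T$ are therefore $\tfrac{1}{N^2}\left(N(2-N)-N\right)=\tfrac{1-N}{N}$ with multiplicity one and $-\tfrac{1}{N}$ with multiplicity $N-1$, both strictly negative for $N=2n+1\geq 3$. Hence $\mathbf{H}+\mathbf{H}^T$, and with it $\mathbf{H}$, is negative definite, giving $\dot{\gamma}\leq 0$ for all $t$. As a remark, the admissible states satisfy $\mathbf{1}^T\mathbf{x}=0$ since $\mathbf{u},\mathbf{u}^*\in\Delta_N$, so the trajectory in fact lives in $\mathbf{1}^\perp$ where the relevant eigenvalue is $-\tfrac{1}{N}$; negative definiteness on all of $\mathbb{R}^N$ subsumes this and is what I would state.
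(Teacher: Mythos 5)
Your proof is correct, and it takes a genuinely different route from the paper's. The paper expands $\mathbf{x}^T\mathbf{H}\mathbf{x}$ into the explicit sum $-\tfrac{N-1}{N^2}\sum_i x_i^2 - \tfrac{N-2}{N^2}\sum_{i<j}x_ix_j$, packages the coefficients into an upper-triangular matrix $\mathbf{S}$, and invokes Sylvester's criterion on the alternating signs of its leading principal minors. You instead symmetrize: using the tournament identity $\mathbf{M}_N+\mathbf{M}_N^T=\mathbf{1}_N-\mathbf{I}_N$ (which you correctly derive from the skew-symmetry of $\mathbf{L}_N$, and which the paper itself uses later in Lemma \ref{lem:5H}), you reduce $\mathbf{H}+\mathbf{H}^T$ to $\tfrac{1}{N^2}\left((2-N)\mathbf{1}_N - N\mathbf{I}_N\right)$ and read off the eigenvalues $\tfrac{1-N}{N}$ and $-\tfrac{1}{N}$ from the rank-one structure of $\mathbf{1}_N$; I have checked that this symmetric part reproduces the paper's quadratic form exactly. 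Your version is arguably the more careful one: Sylvester's criterion is a statement about symmetric (Hermitian) matrices, so applying it to the paper's non-symmetric triangular $\mathbf{S}$ is not strictly licensed as written, whereas your spectral computation on the symmetric part is airtight and additionally identifies the relevant eigenvalue $-\tfrac{1}{N}$ on the invariant subspace $\mathbf{1}^\perp$ where the trajectory actually lives. What the paper's approach buys is that the explicit expansion of $\mathbf{x}^T\mathbf{H}\mathbf{x}$ is reused verbatim; what yours buys is rigor and the exact spectrum.
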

In addition to determining that $\gamma$ is decreasing, Problem \ref{eqn:MainControlQuasi} has further special structure, which allows us to understand the structure of the derived control $\gamma$ in greater detail and ultimately derive a closed-form approximation for large $N$. The derivation is similar to the one found in \cite{GB17} for a special case diffeomorphic to rock-paper-scissors. The proof of the following lemma is given in Appendix \ref{sec:TechProofs}.
\begin{lemma} 
For all $\mathbf{x} \in \mathbb{R}^N$:
\begin{equation}
\mathbf{x}^T\left(\mathbf{H}^T\mathbf{H} + \mathbf{H}\mathbf{H} + \mathbf{H}\right)\mathbf{x} = -\frac{n}{N^2}\left\lVert\mathbf{x}\right\rVert^2.
\end{equation}
\label{lem:5H}
\end{lemma}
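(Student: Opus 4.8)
The plan is to diagonalize everything simultaneously in the Fourier basis. Since $\mathbf{H}$ is a real circulant matrix it is normal, so it is diagonalized by the orthonormal Fourier vectors $v_0,\dots,v_{N-1}$ with eigenvalues $\lambda_0,\dots,\lambda_{N-1}$ obeying the reality relation $\lambda_{N-j}=\overline{\lambda_j}$. The matrices $\mathbf{H}^T\mathbf{H}$, $\mathbf{H}\mathbf{H}$, and $\mathbf{H}$ are then all circulant and share this basis, with mode-$j$ eigenvalues $|\lambda_j|^2$, $\lambda_j^2$, and $\lambda_j$ respectively (using that $\mathbf{H}^T$ has mode-$j$ eigenvalue $\overline{\lambda_j}$, whence $\mathbf{H}^T\mathbf{H}$ has $\overline{\lambda_j}\lambda_j=|\lambda_j|^2$). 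The goal is to reduce the matrix identity to a single scalar condition on these eigenvalues.

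Next I would write a real vector as $\mathbf{x}=\sum_j \hat x_j v_j$ with the reality constraint $\hat x_{N-j}=\overline{\hat x_j}$, expand the quadratic form, and group each conjugate pair of modes. The one genuinely error-prone step, and the main obstacle, is that the combined matrix $\mathbf{H}^T\mathbf{H}+\mathbf{H}\mathbf{H}+\mathbf{H}$ is not symmetric, so the quadratic form sees only its symmetric part: the coefficient multiplying $|\hat x_j|^2$ for $j=1,\dots,n$ is not the single eigenvalue $\alpha_j := |\lambda_j|^2+\lambda_j^2+\lambda_j$ but the conjugate sum $\alpha_j+\alpha_{N-j}=2|\lambda_j|^2+2\,\mathrm{Re}(\lambda_j^2)+2\,\mathrm{Re}(\lambda_j)$, while $\alpha_0$ multiplies $|\hat x_0|^2$. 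The elementary identity $|\lambda_j|^2+\mathrm{Re}(\lambda_j^2)=2(\mathrm{Re}\,\lambda_j)^2$ then collapses each coefficient to $2\big[\,2(\mathrm{Re}\,\lambda_j)^2+\mathrm{Re}(\lambda_j)\,\big]$, so the entire identity depends \emph{only} on the real parts of the eigenvalues of $\mathbf{H}$.

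Finally I would supply those real parts. Because $\mathbf{J}$ is skew-symmetric (Corollary \ref{cor:SkewSymmetric}) its circulant eigenvalues are purely imaginary, so the real-part computation already carried out for $\bm{\mathcal{J}}=\mathbf{J}+\gamma\mathbf{H}$ in the proof of Theorem \ref{thm:GammaSign} shows that $\gamma\,\mathrm{Re}(\lambda_j)=-\tfrac{\gamma}{N^2}\big(n+\tfrac12\big)$, giving $\mathrm{Re}(\lambda_j)=-\tfrac{1}{2N}$ for $j\neq 0$, together with the real eigenvalue $\lambda_0=-\tfrac{n}{N}$. Substituting $a=-\tfrac{1}{2N}$ into $2a^2+a$ yields exactly $-\tfrac{n}{N^2}$, and substituting $a=\lambda_0=-\tfrac{n}{N}$ into $2a^2+a$ (valid since $\lambda_0$ is real) yields the same value. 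Since the weight of $|\hat x_0|^2$ in $\|\mathbf{x}\|^2$ is $1$ and the weight of each paired $|\hat x_j|^2$ is $2$, every mode contributes precisely $-\tfrac{n}{N^2}$ times its weight, so $\mathbf{x}^T(\mathbf{H}^T\mathbf{H}+\mathbf{H}\mathbf{H}+\mathbf{H})\mathbf{x}=-\tfrac{n}{N^2}\|\mathbf{x}\|^2$. A self-contained alternative to invoking Theorem \ref{thm:GammaSign} is to compute the eigenvalues of $\mathbf{M}_N$ directly as $\rho_j=-\omega_j/(1+\omega_j)$ via a geometric series in $\omega_j^2$, whose real part is $-\tfrac12$; this is the only extra calculation needed beyond the bookkeeping above.
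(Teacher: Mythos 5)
Your argument is correct, and it takes a genuinely different route from the paper. The paper proves the identity by brute-force matrix algebra: writing $\mathbf{H}=\tfrac{1}{N^2}\left(N\mathbf{R}+\mathbf{1}_N\right)$ with $\mathbf{R}=\mathbf{M}_N-\mathbf{1}_N$, expanding $\mathbf{H}^T\mathbf{H}$ and $\mathbf{H}\mathbf{H}$, and using the combinatorial identities $\mathbf{M}_N^T+\mathbf{M}_N=\mathbf{1}_N-\mathbf{I}_N$ and $\mathbf{R}\mathbf{1}_N=-(n+1)\mathbf{1}_N$ to collapse the sum to the exact matrix $\tfrac{n}{N^2}\left(2\mathbf{M}_N-\mathbf{1}_N\right)$, whose quadratic form is then evaluated directly. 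You instead diagonalize in the Fourier basis and reduce the whole statement to one scalar fact: the nonzero-mode eigenvalues of $\mathbf{H}$ all have real part $-\tfrac{1}{2N}$ (and $\lambda_0=-\tfrac{n}{N}$), which you correctly extract either from the eigenvalue computation in Theorem \ref{thm:GammaSign} (using skew-symmetry of $\mathbf{J}$ to strip off its purely imaginary contribution) or self-containedly from $\rho_j=-\omega_j/(1+\omega_j)$. I checked the delicate points: the symmetrization over conjugate pairs is handled properly, the identity $|\lambda|^2+\mathrm{Re}(\lambda^2)=2(\mathrm{Re}\,\lambda)^2$ is right, and both $2a^2+a$ evaluations give $-\tfrac{n}{N^2}$, with the mode weights matching those in $\norm{\mathbf{x}}^2$. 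What your approach buys is an explanation of \emph{why} the lemma holds — the combination $\mathbf{H}^T\mathbf{H}+\mathbf{H}\mathbf{H}+\mathbf{H}$ is a multiple of the identity as a quadratic form precisely because the spectrum of $\mathbf{H}$ has constant real part off the zero mode, a property that would generalize to other circulant actuation matrices. What the paper's computation buys is that it is elementary, stays entirely in real arithmetic, and produces the explicit matrix $\tfrac{n}{N^2}\left(2\mathbf{M}_N-\mathbf{1}_N\right)$, not only its quadratic form. One cosmetic caution: when you divide $\gamma\,\mathrm{Re}(\lambda_j)=-\tfrac{\gamma}{N^2}\left(n+\tfrac12\right)$ by $\gamma$, you should say you are matching coefficients of $\gamma$ (or take $\gamma\neq 0$), though your self-contained alternative sidesteps this entirely.
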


\begin{theorem} If $\gamma$ is the open-loop optimal control for Problem \ref{eqn:MainControlQuasi}, then $\gamma$ satisfies the following second order differential equation:
\begin{equation}
\left\{
\begin{aligned}
r\ddot{\gamma} + r\gamma\dot{\gamma} + \frac{n}{N^2}\gamma\left(r\gamma^2 + C\right) &= 0,\\
\gamma(t_f) &= 0,\\
\gamma'(0) &= \frac{1}{r}\mathbf{x}_0^T\mathbf{H}\mathbf{x}_0,\\
C &= \norm{x(t_f)}^2.
\end{aligned}
\right.
\label{eqn:2ODE-Full}
\end{equation}
\end{theorem}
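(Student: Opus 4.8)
The plan is to assemble the differential equation directly from the ingredients already established in Lemma \ref{lem:MainControl1} and Lemma \ref{lem:5H}, so that essentially no new computation is required beyond a single algebraic substitution. The starting point is Equation \ref{eqn:ddotgamma1}, which expresses $r\ddot{\gamma}/\gamma$ as the quadratic form $\mathbf{x}^T(\mathbf{H}^T\mathbf{H} + \mathbf{H}\mathbf{H})\mathbf{x}$. The key observation is that Lemma \ref{lem:5H} evaluates the closely related quadratic form $\mathbf{x}^T(\mathbf{H}^T\mathbf{H} + \mathbf{H}\mathbf{H} + \mathbf{H})\mathbf{x}$, which differs from the one appearing in Equation \ref{eqn:ddotgamma1} only by the single term $\mathbf{x}^T\mathbf{H}\mathbf{x}$.

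First I would write $\mathbf{x}^T(\mathbf{H}^T\mathbf{H} + \mathbf{H}\mathbf{H})\mathbf{x} = \mathbf{x}^T(\mathbf{H}^T\mathbf{H} + \mathbf{H}\mathbf{H} + \mathbf{H})\mathbf{x} - \mathbf{x}^T\mathbf{H}\mathbf{x}$ and then apply Lemma \ref{lem:5H} to the first term, yielding $-\frac{n}{N^2}\norm{\mathbf{x}}^2$. For the second term I would invoke Equation \ref{eqn:dotgamma}, which gives $\mathbf{x}^T\mathbf{H}\mathbf{x} = r\dot{\gamma}$. Substituting both into Equation \ref{eqn:ddotgamma1} produces $r\ddot{\gamma}/\gamma = -\frac{n}{N^2}\norm{\mathbf{x}}^2 - r\dot{\gamma}$. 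Multiplying through by $\gamma$ and rearranging then gives $r\ddot{\gamma} + r\gamma\dot{\gamma} + \frac{n}{N^2}\gamma\norm{\mathbf{x}}^2 = 0$.

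The final step is to eliminate the remaining state dependence $\norm{\mathbf{x}}^2$. For this I would use the closed-loop identity Equation \ref{eqn:closedloop}, namely $\norm{\mathbf{x}}^2 = r\gamma^2 + C$ with $C = \norm{\mathbf{x}(t_f)}^2$, converting the equation into the purely control-variable form $r\ddot{\gamma} + r\gamma\dot{\gamma} + \frac{n}{N^2}\gamma(r\gamma^2 + C) = 0$. The boundary data then follow immediately: $\gamma(t_f) = 0$ is the transversality condition already recorded in Equation \ref{eqn:dotgamma}; $\gamma'(0)$ is obtained by evaluating Equation \ref{eqn:dotgamma} at $t = 0$ using $\mathbf{x}(0) = \mathbf{x}_0$; and the value of $C$ is read directly from Equation \ref{eqn:closedloop}.

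Since each step is a direct substitution, there is no genuine analytic obstacle; the only point requiring care is the bookkeeping in the add-and-subtract manipulation. Specifically, one must recognize that Lemma \ref{lem:5H} was proved precisely to supply the combination $\mathbf{H}^T\mathbf{H} + \mathbf{H}\mathbf{H} + \mathbf{H}$ needed here, so that the extra $\mathbf{H}$ term is absorbed into the $r\gamma\dot{\gamma}$ contribution rather than left behind as an unresolved quadratic form. The heavy lifting has already been carried out in the matrix identities preceding the statement, and the theorem is essentially their immediate corollary.
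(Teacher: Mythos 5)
Your proposal is correct and follows essentially the same route as the paper: the paper simply adds $r\ddot{\gamma}/\gamma = \mathbf{x}^T(\mathbf{H}^T\mathbf{H}+\mathbf{H}\mathbf{H})\mathbf{x}$ and $r\dot{\gamma} = \mathbf{x}^T\mathbf{H}\mathbf{x}$ to produce the combination handled by Lemma \ref{lem:5H}, then substitutes the closed-loop identity $\norm{\mathbf{x}}^2 = r\gamma^2 + C$, which is exactly your add-and-subtract manipulation read in the other direction. The boundary data are justified identically, so no further comment is needed.
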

\begin{proof}
From Lemma \ref{lem:MainControl1} we have:
\begin{displaymath}
r\frac{\ddot{\gamma}}{\gamma} = \mathbf{x}^T\left(\mathbf{H}^T\mathbf{H} + \mathbf{H}\mathbf{H}\right)\mathbf{x},
\end{displaymath}
and
\begin{displaymath}
r\dot{\gamma} = \mathbf{x}^T\mathbf{H}\mathbf{x}.
\end{displaymath}
Adding these together we obtain:
\begin{displaymath}
r\frac{\ddot{\gamma}}{\gamma} + r\dot{\gamma} = \mathbf{x}^T\left(\mathbf{H}^T\mathbf{H} + \mathbf{H}\mathbf{H} + \mathbf{H}\right)\mathbf{x}.
\end{displaymath}
Therefore by Lemma \ref{lem:5H}:
\begin{displaymath}
r\frac{\ddot{\gamma}}{\gamma} + \dot{\gamma} = -\frac{n}{N^2}\norm{x(t)}^2.
\end{displaymath}
From Lemma \ref{lem:MainControl1}, we have:
\begin{displaymath}
r\frac{\ddot{\gamma}}{\gamma} + \dot{\gamma} = -\frac{n}{N^2}\left(r\gamma^2 + C\right),
\end{displaymath}
where $C = \norm{\mathbf{x}(t_f)}^2$. Thus:
\begin{displaymath}
r\ddot{\gamma} + \gamma\dot{\gamma} + \frac{n}{N^2}\gamma\left(r\gamma^2 + C\right) = 0.
\end{displaymath}
The boundary conditions $\gamma(0) = 0$ and $\gamma'(0) = \mathbf{x}_0^T\mathbf{H}\mathbf{x}_0$ follows from Lemma \ref{lem:MainControl1}.
\end{proof}
The following corollary is illustrated in Appendix \ref{app:Examples}.
\begin{corollary} For $N$ large, the open loop control $\gamma$ can be approximated by $\zeta$, a solution to the following two-point boundary value problem:
\begin{equation}
\left\{
\begin{aligned}
r\ddot{\zeta} + r\zeta\dot{\zeta} = 0,\\
\zeta(t_f) &= 0,\\
\zeta'(0) &= \frac{1}{r}\mathbf{x}_0^T\mathbf{H}\mathbf{x}_0.
\end{aligned}
\right.
\label{eqn:2ODE}
\end{equation}
\label{cor:2ODE}
\end{corollary}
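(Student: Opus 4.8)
The plan is to treat Equation \ref{eqn:2ODE-Full} as a \emph{regular} perturbation of Equation \ref{eqn:2ODE} in the small parameter $\epsilon \triangleq n/N^2$. Since $N = 2n+1$, we have $\epsilon = n/(2n+1)^2 \sim 1/(4n) \to 0$ as $N \to \infty$, and the full control equation can be written in the form
\[
r\ddot{\gamma} + r\gamma\dot{\gamma} = -\epsilon\,\gamma\left(r\gamma^2 + C\right).
\]
The structural point that makes this work is that $\epsilon$ multiplies only the lowest-order (non-derivative) term and never the highest derivative $\ddot{\gamma}$, so passing to $\epsilon = 0$ does not change the order of the equation and yields precisely $r\ddot{\zeta} + r\zeta\dot{\zeta} = 0$, the reduced problem of Equation \ref{eqn:2ODE}, with the boundary data $\gamma'(0) = \tfrac{1}{r}\mathbf{x}_0^T\mathbf{H}\mathbf{x}_0$ and terminal condition at $t_f$ carried over unchanged. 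A preliminary observation I would record is that $C$ is bounded independently of $N$: since $\mathbf{x} = \mathbf{u} - \mathbf{u}^*$ is confined to the (translated) simplex, $C = \norm{\mathbf{x}(t_f)}^2 = O(1)$, and on the relevant trajectories $r\gamma^2$ is likewise bounded, so the dropped term is genuinely of size $\epsilon\cdot O(1)$.

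With this in hand, I would recast the dynamics as a first-order system $\dot{\mathbf{y}} = \Phi(\mathbf{y},\epsilon)$ with $\mathbf{y} = (\gamma,\dot{\gamma})$, where $\Phi$ is polynomial in $\mathbf{y}$ and depends smoothly on $\epsilon$ and $C$. The standard theorem on continuous (indeed smooth) dependence of ODE solutions on parameters and initial data then gives that the flow map $(t,\epsilon)\mapsto\gamma(t;\epsilon)$ is continuous uniformly on the compact interval $[0,t_f]$; equivalently, subtracting the reduced equation from the perturbed one and applying a Gr\"onwall estimate to $\gamma-\zeta$ (with matched boundary data) bounds $\sup_{[0,t_f]}|\gamma-\zeta|$ by $O(\epsilon)$. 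Letting $\epsilon \to 0$ along $N = 2n+1$ drives this difference to zero, which is the asserted approximation.

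The step I expect to be the main obstacle is promoting this convergence from an initial-value to the genuine two-point boundary-value setting, since the terminal condition $\gamma(t_f)=0$ is not initial data. The natural device is a shooting argument: treat $s = \gamma(0)$ as a free parameter (with $\dot{\gamma}(0)$ prescribed) and study the scalar terminal map $\mathcal{T}(s,\epsilon) \triangleq \gamma(t_f;s,\epsilon)$. If $\partial_s\mathcal{T}(s_0,0)\neq 0$ at the shooting value $s_0$ solving the reduced BVP --- a transversality condition one checks against the variational equation of $r\ddot{\zeta}+r\zeta\dot{\zeta}=0$ --- then the implicit function theorem yields, for all small $\epsilon$, a unique root $s(\epsilon)\to s_0$ and hence a BVP solution $\gamma$ converging uniformly to $\zeta$. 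I would therefore state the corollary explicitly as a uniform-on-$[0,t_f]$ asymptotic approximation and close by combining the $O(\epsilon)$ continuous-dependence estimate with this implicit-function/shooting step; confirming the non-degeneracy of $\partial_s\mathcal{T}$ (ruling out fold degeneracies of the terminal map near the limiting solution) is the only genuinely delicate ingredient, everything else being routine smooth dependence and boundedness.
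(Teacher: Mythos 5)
Your core idea is exactly the paper's: the full control equation \ref{eqn:2ODE-Full} differs from \ref{eqn:2ODE} only by the term $\tfrac{n}{N^2}\gamma\left(r\gamma^2+C\right)$, whose coefficient $n/N^2\sim 1/(4n)$ vanishes as $N\to\infty$ while $C=\norm{\mathbf{x}(t_f)}^2$ and $r\gamma^2\leq\norm{\mathbf{x}}^2$ stay bounded because $\mathbf{x}$ lives in a translated simplex; the paper states the corollary with no proof at all and defers to numerics (``$N\geq 5$ is sufficient''), so in substance you are on the same track. What you add beyond the paper is the machinery to make ``can be approximated by'' precise: the Gr\"onwall/continuous-dependence estimate for the initial-value flow and, more importantly, the shooting/implicit-function step needed because the data are split between $t=0$ (where $\dot\gamma$ is prescribed) and $t=t_f$ (where $\gamma$ vanishes). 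That second step is the right device and is genuinely necessary for a uniform BVP statement, but note that you have not actually verified the non-degeneracy $\partial_s\mathcal{T}(s_0,0)\neq 0$ of the terminal map; since the paper itself observes that solutions of \ref{eqn:2ODE} branch (the closed form is $\zeta\sim O(|\tanh(\cdot)|)$ and is only valid on $[0,t_f]$), a fold of the terminal map is not automatically excluded, and this is the one ingredient you would still have to check against the variational equation of $r\ddot\zeta+r\zeta\dot\zeta=0$ before the argument is complete. One further small caveat: $C$ is determined by the terminal state of the coupled system, hence depends on $N$ and on $\gamma$ itself; your bound $C=O(1)$ uniformly in $N$ is correct and suffices for the perturbation estimate, but it is worth saying explicitly that you are treating $C$ as a bounded parameter rather than a fixed constant.
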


\subsection{Closed Form Analysis of the Limiting Behavior}
For simplicity, let $r = 1$ in the following analysis. Corollary \ref{cor:2ODE} can be made more useful by re-writing Equation \ref{eqn:2ODE} as a system of first order differential equations and examining the phase portrait (see Fig. \ref{fig:PhasePortrait}):
\begin{equation}
\begin{aligned}
\dot{\zeta} &= v,\\
\dot{v} &= -\zeta v,\\
\zeta(t_f) &= 0,\\
v(0) &= \mathbf{x}_0^T\mathbf{H}\mathbf{x}_0.
\end{aligned}
\label{eqn:SysODE}
\end{equation}
\begin{figure}[htbp]
\centering
\includegraphics[scale=0.5]{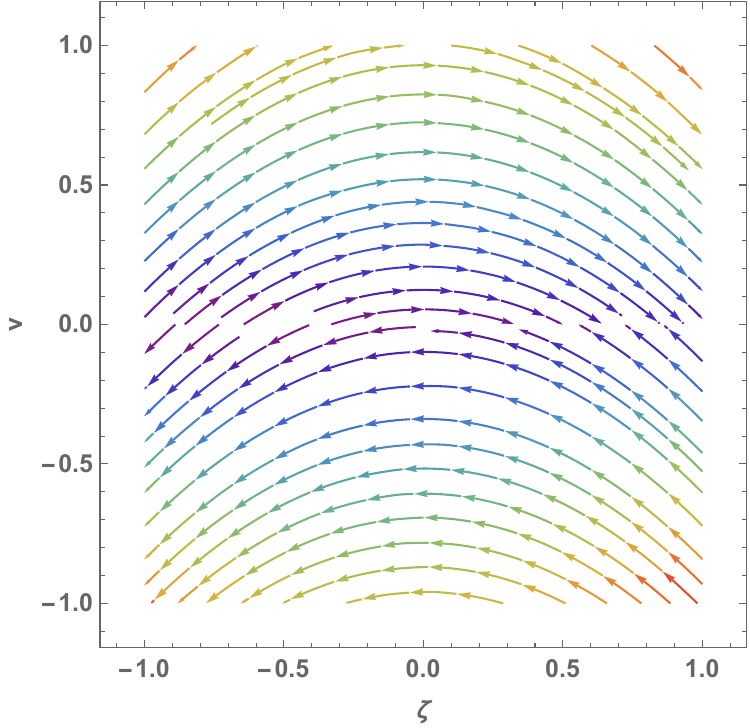}
\caption{The phase portrait of the first order system representing the limiting behavior of the open loop control $\gamma$ and it's first derivative.}
\label{fig:PhasePortrait}
\end{figure}
The phase portrait indicates a sharp behavioral change in the direction field when moving from the $v < 0$ half-plane to the $v > 0$ half-plane. For the half-plane where $v < 0$, $\gamma \geq 0$ necessarily by Theorem \ref{thm:GammaSign}. This is consistent with Proposition \ref{prop:DotGammaSign}.

System \ref{eqn:SysODE} has a closed form solution\footnote{Derived with \textit{Mathematica}$^\text{\textsc{TM}}$.} with several branches. The relevant solutions on the interval $[0,t_f]$ are:
\begin{align*}
\zeta(t) &= \sqrt{2} \sqrt{\kappa  \tanh ^2\left(\frac{\sqrt{\kappa }
   \left(t_f-t\right)}{\sqrt{2}}\right)},\\
\zeta'(t) &= -2 \sqrt{\kappa } \sqrt{\kappa  \tanh ^2\left(\frac{\sqrt{\kappa }
   \left(t_f-t\right)}{\sqrt{2}}\right)} \csch\left(\sqrt{2} \sqrt{\kappa }
   \left(t_f-t\right)\right).
\end{align*}
Note, $\zeta \sim O(|\tanh(\cdot)|)$. Thus, the usual behavior of $\tanh$ is modified so that $\zeta$ is a decreasing function on $[0,t_f]$ and then an increasing function outside this range. As a consequence, this solution is only valid on the control domain of interest.

In the closed form solution, $\kappa$ is a constant of integration that must be chosen so that $v(0) = \zeta'(0) = \mathbf{x}_0^T\mathbf{H}\mathbf{x}_0$. Finding a closed form expression for $\kappa$ is difficult. However as $N$ increases, $\mathbf{x}_0^T\mathbf{H}\mathbf{x}_0$ decreases in size because $\mathbf{H} \sim 1/N$ and $\norm{\mathbf{x}}$ is bounded, since $\mathbf{x}$ is just a translation of $\mathbf{u} \in \Delta_N$. For small values of $\mathbf{x}_0^T\mathbf{H}\mathbf{x}_0$, we expect $\kappa$ to be small because of the structure of $\zeta'(t)$. Furthermore, $\zeta'(0)$ can be approximated as:
\begin{displaymath}
\zeta'(0) \approx -\kappa + O(\kappa^2).
\end{displaymath}
Thus, setting $\kappa = -\mathbf{x}_0^T\mathbf{H}\mathbf{x}_0$ will give a reasonable approximation of the solution. This is illustrated in Appendix \ref{app:Examples}.

\subsection{Sufficiency of the Euler-Lagrange Conditions}\label{sec:Sufficiency}
Corollary \ref{cor:main} contains both necessary and sufficient conditions for the computed $\gamma(t)$ to be the optimal control. However, these conditions require the solution of the matrix Riccati equation. For the quasi-linearized optimal control problem on cyclic games, a simpler test can be constructed using Mangaserian's condition \cite{Mang66}, which states that the Hessian of the Hamiltonian must be positive definite (i.e., jointly convex in state and control). For the optimal control in quasi-linearized cyclic games, the Hamiltonian of this optimal control problem is:
\begin{displaymath}
\mathcal{H}(\mathbf{x},\gamma,\bm{\lambda}) = \norm{\mathbf{x}}^2 + \frac{1}{r}\gamma^2 + \bm{\lambda}^T\mathbf{J}\mathbf{x} + \gamma\bm{\lambda}^T\mathbf{H}\mathbf{x}.
\end{displaymath}
This is a specialization of Equation \ref{eqn:Hamiltonian} to the quasi-linearized cyclic games problem. The Hessian of $\mathcal{H}$ is:
\begin{displaymath}
\mathfrak{H} = \begin{bmatrix}
\mathbf{I}_N & \mathbf{H}^T\bm{\lambda}^T\\
\bm{\lambda}\mathbf{H}& r
\end{bmatrix}.
\end{displaymath}
Here, $\bm{\lambda}$ is the co-state for the optimal control problem.
\begin{theorem} If $r > \norm{\mathbf{H}^T\bm{\lambda}^T}^2$ for all $t \in [0,t_f]$, then the control derived in Lemma \ref{lem:MainControl1} is optimal.
\label{thm:Sufficient}
\end{theorem}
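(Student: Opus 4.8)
The plan is to establish this via Mangasarian's sufficiency theorem, which upgrades the Pontryagin/Euler--Lagrange necessary conditions to sufficient ones whenever the terminal cost $\Psi$ is convex in the state and the Hamiltonian $\mathcal{H}(\mathbf{x},\gamma,\bm{\lambda})$ is jointly convex in $(\mathbf{x},\gamma)$ along the optimal co-state trajectory $\bm{\lambda}(t)$. For Problem \ref{eqn:MainControlQuasi} there is no terminal cost ($\Psi \equiv 0$), which is trivially convex, and the admissible control set is all of $\mathbb{R}$, which is convex; hence the entire argument reduces to verifying that the Hessian $\mathfrak{H}$ displayed above is positive definite for every $t \in [0,t_f]$. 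The control of Lemma \ref{lem:MainControl1} already satisfies the necessary conditions by construction, so positive definiteness of $\mathfrak{H}$ will immediately yield optimality.

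To check positive definiteness of the $(N+1)\times(N+1)$ symmetric block matrix
\[
\mathfrak{H} = \begin{bmatrix} \mathbf{I}_N & \mathbf{H}^T\bm{\lambda} \\ \bm{\lambda}^T\mathbf{H} & r \end{bmatrix},
\]
I would invoke the Schur complement test relative to the leading block $\mathbf{I}_N$, which is itself positive definite. Since $\mathbf{I}_N^{-1} = \mathbf{I}_N$, the (scalar) Schur complement is
\[
r - (\mathbf{H}^T\bm{\lambda})^T \mathbf{I}_N (\mathbf{H}^T\bm{\lambda}) = r - \bm{\lambda}^T\mathbf{H}\mathbf{H}^T\bm{\lambda} = r - \norm{\mathbf{H}^T\bm{\lambda}}^2.
\]
By the standard block-matrix criterion, $\mathfrak{H}$ is positive definite exactly when this Schur complement is strictly positive, i.e.\ precisely under the hypothesis $r > \norm{\mathbf{H}^T\bm{\lambda}^T}^2$.

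With $\mathfrak{H}$ positive definite on all of $[0,t_f]$, the map $(\mathbf{x},\gamma)\mapsto\mathcal{H}(\mathbf{x},\gamma,\bm{\lambda}(t))$ is strictly convex at each time, so Mangasarian's theorem applies and the candidate control is (globally, hence in particular weakly locally) optimal. The calculation itself is routine; the only points requiring care are bookkeeping ones: confirming that convexity is being assessed along the actual co-state $\bm{\lambda}(t)$ delivered by the necessary conditions, so that the bilinear term $\gamma\,\bm{\lambda}^T\mathbf{H}\mathbf{x}$ produces exactly the off-diagonal coupling appearing in $\mathfrak{H}$, and noting that the absence of a terminal cost removes any convexity requirement on $\Psi$. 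Rather than a genuine obstacle, I expect the main conceptual step to be reading the definiteness condition correctly off the Schur complement and matching it to the stated threshold on $r$.
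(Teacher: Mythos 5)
Your argument is correct and is essentially the paper's own: the paper verifies positive definiteness of $\mathfrak{H}$ by exhibiting its block Cholesky factorization, whose existence is governed by exactly the Schur complement $r - \norm{\mathbf{H}^T\bm{\lambda}}^2 > 0$ that you compute, and then invokes Mangasarian's condition just as you do. The Schur-complement and Cholesky phrasings are the same factorization viewed two ways, so there is nothing substantive to distinguish.
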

\begin{proof} The Hessian matrix $\mathfrak{H}$ is positive definite if and only if it has a Cholesky decomposition, which then implies that $\mathcal{H}(\mathbf{x},\gamma,\bm{\lambda})$ is convex in both its state and control. Computing the Cholesky decomposition for $\mathfrak{H}$ we obtain:
\begin{displaymath}
\mathfrak{H} = \begin{bmatrix}
\mathbf{I}_N & \mathbf{0}\\
\bm{\lambda}^T\mathbf{H} & \sqrt{r - \norm{\mathbf{H}^T\bm{\lambda}^T}}^2
\end{bmatrix}\begin{bmatrix}
\mathbf{I}_N & \mathbf{H}^T\bm{\lambda}^T\\
\mathbf{0} & \sqrt{r - \norm{\mathbf{H}^T\bm{\lambda}^T}^2}
\end{bmatrix}.
\end{displaymath}
This decomposition exists if and only if $r > \norm{\mathbf{H}^T\bm{\lambda}^T}^2$. The result follows immediately.
\end{proof}
This sufficient condition for optimality is precisely the one identified in \cite{GB17} for the triple public goods game, which was shown to be diffeomorphic to the cyclic game with three strategies (rock-paper-scissors). In Appendix \ref{app:Examples} we show several instances where this sufficient condition is satisfied and one where the matrix Ricatti equation (see Corollary \ref{cor:main}) must be used to establish weak local optimality.

\section{Final Discussion and Future Directions}\label{sec:Conclusion}
In this paper we studied an optimal control problem arising from the class of complete, odd circulant games that generalize rock-paper-scissors. We used the replicator dynamics as the natural equations of motion in the optimal control problem. In particular, the control problem was to drive trajectories toward the unique interior fixed point of the replicator dynamics. We first studied the uncontrolled fixed points of the replicator. We then showed that this class of problems admits a natural quasi-linearization, and that this quasi-linearized optimal control problem has a open-loop optimal control satisfying a specific second order differential equation. Furthermore, we showed that as the number of strategies grows, this differential equation admits a closed form solution. Numerical comparisons showed that this limiting case provides a natural approximation for the optimal control. We also showed that even when the starting conditions for the optimal control are far from the interior Nash equilibrium, where quasi-linearization is performed, we still can use it to approximate the optimal control in the original problem.

A promising direction for future research would be to extend these results to arbitrary circulant games or to cyclic games with a control parameter. In this context, a cyclic game is any circulant game in which the first row of the $\mathbf{L}_N$ matrix has form $(0,1,0,\dots,0,-1)$. Here the number of $0$'s between $1$ and $-1$ is $N-3$. The resulting matrix $\mathbf{M}_N$ with $1$'s corresponding to the $1'$ in $\mathbf{L}$ and $0$ elsewhere is the adjacency matrix of a cycle graph. Rock-paper-scissors is the only game that is both a cyclic and complete odd circulant graph (because the three-cycle is the complete graph on three vertices). Any circulant game whose payoff matrix can be written as $\mathbf{L}_N + \gamma\mathbf{M}_N$ will obey Equation \ref{eqn:dotgamma} because the circulant matrices form a commutative algebra. Beyond that, it is possible addition dynamics govern the optimal controls in these cases. This presents a logical area for further study.

Another extension of this work is to study circulant games with an off-center interior equilibrium point, rather than $\mathbf{u}^* = \tfrac{1}{N}\mathbf{1}$. Doing so, however, should introduce additional control parameters. This would be an interesting extension as well since this paper considered only a single control parameter. It also will make the application more realistic since an individual may have varying degrees of control over each population. In addition to introducing additional controls, another natural extension of this work is to derive controllers that drive the system toward a non-interior equilibrium point. In particular it would be intriguing to study the problem of deliberately eliminating one or more species.

Finally, studying more complex dynamics with control, like those found in the mutator-replicator, may produce interesting and useful results. However, these may not admit the necessary conditions to allow the control mechanisms identified in this paper to be applied.

\section*{Acknowledgement}
The authors thank Andrew Belmonte for his helpful comments and discussion. CG is supported by NSF grant CMMI-1463482 and CMMI-1932991.

\appendix

\section{Optimal Control Problems}\label{sec:OptimalControl}
In Section \ref{sec:GeneralControl} we introduce the problem of driving a population playing a cyclic game to its mixed strategy equilibrium. We present key facts from optimal control theory used in this study. Details are available in \cite{Mang66,Ki04,Friesz10}.

 A Bolza type optimal control problem is an optimization problem of the form:
 \begin{equation}\left\{
 \begin{aligned}
 \min\;\; & \Psi(\mathbf{x}(t_f)) + \int_{t_0}^{t_f} f(\mathbf{x}(t), \mathbf{u}(t),t) dt\\
 s.t.\;\; & \mathbf{\dot{x}} = \mathbf{g}(\mathbf{x}(t), \mathbf{u}(t),t),\\
 &\mathbf{x}(0) = \mathbf{x}_0.
 \end{aligned}\right.
 \label{eqn:Bolza}
 \end{equation}
 When $\Psi(\mathbf{x}(t_f)) \equiv 0$, this is called a Lagrange type optimal control problem. The vector of variables $\mathbf{x}$ is called the state, while the vector of decision variables $\mathbf{u}$ is called the control. Additional constraints on $\mathbf{u}$, $\mathbf{x}$ or the joint function of $\mathbf{x}$ and $\mathbf{u}$ can be added.

 The \textit{Hamiltonian} with adjoint variables (Lagrange multipliers) $\boldsymbol{\lambda}$ for this problem is:
 \begin{displaymath}
 \mathcal{H}(\mathbf{x},\boldsymbol{\lambda}, u) = f(\mathbf{x}(t), \mathbf{u}(t),t) + \boldsymbol\lambda^T\mathbf{g}(\mathbf{x}(t), \mathbf{u}(t),t).
 \end{displaymath}

 In what follows, we assume that all $f(\mathbf{x},\mathbf{u},t)$ and $\mathbf{g}(\mathbf{x},\mathbf{u},t)$ are continuous and differentiable in $\mathbf{x}$ and $\mathbf{u}$, and $\Psi(\mathbf{x}(t_f))$ is continuous and differentiable in $x(t_f)$. A proof of this lemma can be found in almost every book on optimal control (e.g. \cite{Friesz10}).

 \begin{lemma}[Necessary Conditions of Optimal Control] If $\mathbf{u}^*$ is a solution to Optimal Control Problem (\ref{eqn:Bolza}), then there is a vector of adjoint variables $\boldsymbol{\lambda}^*$ so that:
 \begin{equation}
 \mathcal{H}(\mathbf{x}^*(t), \mathbf{u}^*(t), \boldsymbol{\lambda}^*(t)) \leq \mathcal{H}(\mathbf{x}^*(t), \mathbf{u}(t), \boldsymbol{\lambda}^*(t))
 \end{equation}
 for all $t \in [0, T]$ and for all admissible inputs $\mathbf{u}$, and the following conditions hold:
 \begin{enumerate}
 \item Pontryagin's Minimim Principle: $\dot{\mathbf{u}}(t) = \frac{\partial \mathcal{H}}{\partial \mathbf{u}} = \mathbf{0}$ and $\frac{\partial^2\mathcal{H}}{\partial \mathbf{u}^2}$ is positive definite,
 \item Co-State Dynamics:
 \begin{displaymath}
 \dot{\boldsymbol{\lambda}}(t) = -\frac{\partial \mathcal{H}}{\partial \mathbf{x}} = -\boldsymbol{\lambda}^T(t)\frac{\partial \mathbf{g}(\mathbf{x}, \mathbf{u})}{\partial \mathbf{x}} + \frac{\partial f(\mathbf{x}, \mathbf{u})}{\partial \mathbf{x}},
 \end{displaymath}
 \item State Dynamics: $\dot{x}(t) = \frac{\partial \mathcal{H}}{\partial \boldsymbol{\lambda}} = \mathbf{g}(\mathbf{x}, \mathbf{u})$,
 \item Initial Condition: $\mathbf{x}(0) = \mathbf{x}_0$, and 
 \item Transversality Condition: $\boldsymbol{\lambda}(t_f) = \frac{\partial \Psi}{\partial \mathbf{x}}(\mathbf{x}(t_f))$.
 \end{enumerate}
 \label{lem:OptConNec}
 \end{lemma}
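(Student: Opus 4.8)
The plan is to derive all five conditions by the classical calculus-of-variations argument, adjoining the state equation with a multiplier function $\boldsymbol{\lambda}(t)$ and imposing stationarity of the augmented cost along the optimal pair $(\mathbf{x}^*,\mathbf{u}^*)$. First I would form the augmented objective
\[
\mathcal{J} = \Psi(\mathbf{x}(t_f)) + \int_{t_0}^{t_f}\left[\mathcal{H}(\mathbf{x},\mathbf{u},\boldsymbol{\lambda}) - \boldsymbol{\lambda}^T\dot{\mathbf{x}}\right]dt,
\]
which coincides with the true cost whenever $\dot{\mathbf{x}} = \mathbf{g}$; since that constraint is enforced along any feasible trajectory, $\boldsymbol{\lambda}$ is free to be specified later. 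Integrating the term $\boldsymbol{\lambda}^T\dot{\mathbf{x}}$ by parts transfers the time derivative onto $\boldsymbol{\lambda}$ and produces boundary contributions at $t_0$ and $t_f$.

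Next I would perturb the control, $\mathbf{u} = \mathbf{u}^* + \epsilon\,\delta\mathbf{u}$, let $\delta\mathbf{x}$ denote the induced state variation with $\delta\mathbf{x}(t_0) = \mathbf{0}$ (the initial datum being fixed), and compute the first variation $\delta\mathcal{J}$. Grouping terms, $\delta\mathcal{J}$ splits into an interior integral whose integrand is a linear combination of $\delta\mathbf{x}$ and $\delta\mathbf{u}$, together with a boundary term at $t_f$ proportional to $\delta\mathbf{x}(t_f)$. The decisive step is to exploit the freedom in $\boldsymbol{\lambda}$: I would choose it so that the coefficient of $\delta\mathbf{x}$ in the integrand vanishes identically, which is precisely the co-state equation $\dot{\boldsymbol{\lambda}} = -\partial\mathcal{H}/\partial\mathbf{x}$ of condition (2). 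With the $\delta\mathbf{x}$ terms eliminated, requiring $\delta\mathcal{J} = 0$ for arbitrary $\delta\mathbf{u}$ forces $\partial\mathcal{H}/\partial\mathbf{u} = \mathbf{0}$, the stationarity half of condition (1), and the residual boundary term forces $\boldsymbol{\lambda}(t_f) = \partial\Psi/\partial\mathbf{x}$, the transversality condition (5). Conditions (3) and (4) are recovered immediately, since stationarity of $\mathcal{J}$ with respect to $\boldsymbol{\lambda}$ returns the state equation and the prescribed initial condition.

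The remaining pieces are the second-order condition and the global Hamiltonian inequality. Positive-definiteness of $\partial^2\mathcal{H}/\partial\mathbf{u}^2$ in condition (1) follows from demanding that the critical pair be a minimum rather than a saddle, i.e.\ from nonnegativity of the second variation. The genuinely harder point is the pointwise minimization inequality $\mathcal{H}(\mathbf{x}^*,\mathbf{u}^*,\boldsymbol{\lambda}^*) \leq \mathcal{H}(\mathbf{x}^*,\mathbf{u},\boldsymbol{\lambda}^*)$: this is a \emph{global} statement over all admissible controls, whereas the weak (smooth) variations above establish only a local stationary point. Obtaining it in full generality requires Pontryagin's needle-variation technique, in which one perturbs $\mathbf{u}^*$ on a vanishingly short interval and tracks the resulting endpoint displacement through the co-state. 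I expect this to be the main obstacle; however, under the smoothness and convexity hypotheses in force here the stationarity and Hessian conditions already pin down the global minimizer of $\mathcal{H}$ in $\mathbf{u}$, so the inequality is recovered without invoking the full needle-variation machinery, and I would cite \cite{Friesz10} for the complete argument.
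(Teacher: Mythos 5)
The paper does not actually prove this lemma: it is stated in the appendix as a standard fact, with the preceding text remarking that a proof ``can be found in almost every book on optimal control'' and citing \cite{Friesz10}. Your proposal reconstructs the classical variational derivation --- adjoining the dynamics with a multiplier, integrating by parts, choosing $\boldsymbol{\lambda}$ to annihilate the $\delta\mathbf{x}$ terms, and reading off stationarity and transversality from the residual first variation --- which is precisely the argument the cited references give. So you are not diverging from the paper; you are supplying the textbook proof the paper elects to omit, and the overall structure of your sketch is sound.

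Two details deserve care. First, nonnegativity of the second variation yields only positive \emph{semi}definiteness of $\partial^2\mathcal{H}/\partial\mathbf{u}^2$ (the Legendre--Clebsch condition); strict positive definiteness as written in the lemma is the strengthened Legendre--Clebsch condition and is not literally necessary. That imprecision originates in the lemma statement itself, but your derivation should not claim to extract strict definiteness from ``minimum rather than saddle.'' Second, your closing claim that stationarity plus the Hessian condition already ``pin down the global minimizer of $\mathcal{H}$ in $\mathbf{u}$'' requires global convexity of $\mathcal{H}$ in the control, which is not a hypothesis of the general Bolza problem as stated (it does hold for the specific quadratic-in-$\gamma$ Hamiltonians used later in the paper). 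For the general statement, the pointwise minimization inequality genuinely requires the needle-variation argument you mention; since you flag this explicitly and defer to \cite{Friesz10} --- exactly as the paper does --- the proposal is acceptable as a sketch, but the weak-variation calculation alone delivers only local stationarity, not the global inequality.
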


 We will use the following restricted form of Mangasarian's Sufficiency condition \cite{Mang66} to argue a controller we derive in Section \ref{sec:CyclicControl} is the optimal controller.

 \begin{lemma}[Mangasarian's Sufficiency Condition - Restricted Form] Suppose $(\mathbf{x}^*,\mathbf{u}^*)$ satisfies the necessary conditions from Lemma \ref{lem:OptConNec} and  $\mathcal{H}$ is jointly convex in $\mathbf{x}$ and $\mathbf{u}$ for all time, and $\Psi(\mathbf{x}(t_f)) \equiv 0$. Then $(\mathbf{x}^*,\mathbf{u}^*)$ is a globally optimal control in the sense that it minimizes the objective functional.
 \label{lem:OptConSuff}
 \end{lemma}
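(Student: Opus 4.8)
The plan is to establish global optimality by a direct comparison argument: I take an arbitrary admissible pair $(\mathbf{x},\mathbf{u})$ that satisfies the state dynamics $\dot{\mathbf{x}} = \mathbf{g}(\mathbf{x},\mathbf{u},t)$ and the initial condition $\mathbf{x}(0)=\mathbf{x}_0$, and show its cost is no smaller than that of the candidate $(\mathbf{x}^*,\mathbf{u}^*)$ with associated co-state $\boldsymbol{\lambda}^*$. The first step is to rewrite the running-cost difference through the Hamiltonian identity $f = \mathcal{H} - \boldsymbol{\lambda}^T\mathbf{g}$. Since both trajectories are admissible, $\mathbf{g}$ equals $\dot{\mathbf{x}}$ along each, so evaluating the Hamiltonian with the single fixed multiplier $\boldsymbol{\lambda}^*$ gives
\[
f(\mathbf{x},\mathbf{u},t) - f(\mathbf{x}^*,\mathbf{u}^*,t) = \left(\mathcal{H} - \mathcal{H}^*\right) - (\boldsymbol{\lambda}^*)^T(\dot{\mathbf{x}} - \dot{\mathbf{x}}^*),
\]
where starred quantities are evaluated along the candidate optimal trajectory.

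The second step is to invoke joint convexity. Because $\mathcal{H}$ is jointly convex in $(\mathbf{x},\mathbf{u})$ for each fixed $t$ and fixed $\boldsymbol{\lambda}^*$, the gradient (supporting hyperplane) inequality yields
\[
\mathcal{H} - \mathcal{H}^* \geq (\nabla_{\mathbf{x}}\mathcal{H}^*)^T(\mathbf{x}-\mathbf{x}^*) + (\nabla_{\mathbf{u}}\mathcal{H}^*)^T(\mathbf{u}-\mathbf{u}^*).
\]
I then substitute the necessary conditions from Lemma \ref{lem:OptConNec}: the control stationarity condition $\nabla_{\mathbf{u}}\mathcal{H}^* = \mathbf{0}$ annihilates the second term, and the co-state dynamics $\dot{\boldsymbol{\lambda}}^* = -\nabla_{\mathbf{x}}\mathcal{H}^*$ convert the first gradient into $-\dot{\boldsymbol{\lambda}}^*$. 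Combining with the first step produces the pointwise bound
\[
f - f^* \geq -(\dot{\boldsymbol{\lambda}}^*)^T(\mathbf{x}-\mathbf{x}^*) - (\boldsymbol{\lambda}^*)^T(\dot{\mathbf{x}}-\dot{\mathbf{x}}^*) = -\frac{d}{dt}\left[(\boldsymbol{\lambda}^*)^T(\mathbf{x}-\mathbf{x}^*)\right].
\]

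The final step is to integrate this inequality over $[t_0,t_f]$; the right-hand side telescopes into a boundary term, leaving $\int_{t_0}^{t_f}(f-f^*)\,dt \geq -\left[(\boldsymbol{\lambda}^*)^T(\mathbf{x}-\mathbf{x}^*)\right]_{t_0}^{t_f}$. This boundary term vanishes at $t_0$ because both trajectories share the initial state $\mathbf{x}_0$, and it vanishes at $t_f$ precisely because $\Psi \equiv 0$ forces the transversality condition to read $\boldsymbol{\lambda}^*(t_f) = \mathbf{0}$; hence the objective difference is nonnegative and $(\mathbf{x}^*,\mathbf{u}^*)$ is optimal. I do not expect a genuine obstacle here, since the calculation is short once assembled; the real content is conceptual bookkeeping of which hypothesis does what. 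Joint convexity is exactly the ingredient that upgrades the first-order stationarity conditions from a local to a global comparison (so the conclusion is a global, not merely weak local, minimum), and the restriction $\Psi \equiv 0$ is exactly what kills the terminal boundary term — for a general Bolza terminal cost one would instead need to add convexity of $\Psi$ and handle that term through its own supporting-hyperplane inequality. The only point requiring care is restricting the comparison class to admissible pairs, so that $\mathbf{g} = \dot{\mathbf{x}}$ may be used uniformly throughout.
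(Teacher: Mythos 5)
Your argument is correct, and it is worth noting that the paper itself offers no proof of this lemma at all --- it is stated as a known result with a citation to Mangasarian, so there is nothing to compare against except the classical argument, which is exactly what you have reproduced. Each step checks out: the identity $f = \mathcal{H} - \boldsymbol{\lambda}^T\mathbf{g}$ with the fixed multiplier $\boldsymbol{\lambda}^*$, the supporting-hyperplane inequality from joint convexity, the substitution of $\nabla_{\mathbf{u}}\mathcal{H}^* = \mathbf{0}$ and $\dot{\boldsymbol{\lambda}}^* = -\nabla_{\mathbf{x}}\mathcal{H}^*$, and the telescoping of $-\tfrac{d}{dt}\bigl[(\boldsymbol{\lambda}^*)^T(\mathbf{x}-\mathbf{x}^*)\bigr]$ into boundary terms killed by the shared initial condition and by $\boldsymbol{\lambda}^*(t_f) = \nabla\Psi = \mathbf{0}$. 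Your closing remarks are also the right conceptual summary: convexity is what globalizes the first-order conditions, and $\Psi \equiv 0$ (or, in the general Bolza case, convexity of $\Psi$) is what disposes of the terminal term. One small point of hygiene: the co-state equation as printed in Lemma \ref{lem:OptConNec} of the paper carries a sign slip in its second expression ($-\partial\mathcal{H}/\partial\mathbf{x}$ should expand to $-\boldsymbol{\lambda}^T\partial\mathbf{g}/\partial\mathbf{x} - \partial f/\partial\mathbf{x}$); you correctly used the form $\dot{\boldsymbol{\lambda}}^* = -\nabla_{\mathbf{x}}\mathcal{H}^*$, which is the one the argument needs, so your proof is unaffected.
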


 We note that Mangasarian's Sufficiency Condition specifically implies the \textit{strong Legendre-Clebsch necessary conditions} for optimality of the control:
 \begin{enumerate}
 \item $\mathcal{H}_u = 0$, and
 \item $\mathcal{H}_{uu} < 0$.
 \end{enumerate}
 In the paper, we also discuss the sufficiency of the boundedness of the matrix Riccati equation. Since this plays only a small role in our overall analysis, we introduce this when it is needed.

\section{Technical Proofs}\label{sec:TechProofs}

\begin{proof}[Proof of Lemma \ref{lem:DF}] We prove the result for Row 1 of $D_\mathbf{u}\mathbf{F}$. The remainder of the argument follows from the circulant structure of $\mathbf{L}_N$. We have:
\begin{displaymath}
\mathbf{F}_1(\mathbf{u}) = u_1\mathbf{e}_1^T\mathbf{L}_N\mathbf{u},
\end{displaymath}
because $\mathbf{u}^T\mathbf{L}_N\mathbf{u} = 0$. Note:
\begin{equation}
u_1\mathbf{e}_1^T\mathbf{L}_N\mathbf{u} = u_1\left(\sum_{j=2}^{N}(-1)^{j-1}u_j\right).
\label{eqn:Row1}
\end{equation}
Differentiating with respect to $u_1$ and evaluating at $u_1 = u_2 = \cdots = u_n = \tfrac{1}{N}$ yields:
\begin{displaymath}
\left[D_\mathbf{u}\mathbf{F}\right]_{1,1} = \frac{1}{N}\sum_{j=2}^{N}(-1)^{j-1} = 0,
\end{displaymath}
since $N$ is odd. Differentiating Expression \ref{eqn:Row1} with respect to $u_j$ and evaluating at $u_1 = u_2 = \cdots = u_n = \tfrac{1}{N}$ yields:
\begin{displaymath}
\left[D_\mathbf{u}\mathbf{F}\right]_{1,j} = \frac{(-1)^{j-1}}{N} = \frac{1}{N}\mathbf{L}_{N_{1,j}}.
\end{displaymath}
The result now follows from the fact that $\mathbf{L}_N$ is a circulant matrix.
\end{proof}

\begin{proof}[Proof of Lemma \ref{lem:DG}] We show the result for Row 1 of $D_\mathbf{u}\mathbf{G}$. The remainder of the argument follows from the circulant structure of $\mathbf{M}_N$. We have already noted in Corollary \ref{cor:uu} that:
\begin{displaymath}
\mathbf{u}^T\mathbf{M}_N\mathbf{u} = \sum_{i=1}^N\sum_{j > i} u_iu_j.
\end{displaymath}
We compute:
\begin{displaymath}
\mathbf{e}_1^T\mathbf{M}_N\mathbf{u} = \sum_{j=1}^{(N-1)/2} u_{2j+1}.
\end{displaymath}
Differentiate with respect to $k = 2j+1$ for $j \in \{1,\dots,n\}$, corresponding to a non-zero index in the first row of $\mathbf{M}_N$. We have:
\begin{displaymath}
\frac{\partial\mathbf{G}}{\partial u_k} = u_1\left(1 - \left(\sum_{j \neq k} u_j\right)\right).
\end{displaymath}
Evaluating at $u_1 = u_2 = \cdots = u_n = \tfrac{1}{N}$ we obtain:
\begin{displaymath}
\left[D_\mathbf{u}\mathbf{G}\right]_{1,k} = \frac{1}{N}\left(1 - \frac{N-1}{N}\right) = \frac{1}{N^2},
\end{displaymath}
for $k=2j+1$ with $j \in \{1,\dots,n\}$. Differentiate now with respect to $u_1$ to obtain:
\begin{equation}
\frac{\partial\mathbf{G}}{\partial u_1} = \sum_{j=1}^{(N-1)/2} u_{2j+1} - 2\sum_{j = 2}^N u_1u_j - \sum_{i=2}^N\sum_{j > i}u_{i}u_{j}.
\end{equation}
Evaluating at $u_1 = u_2 = \cdots = u_n = \tfrac{1}{N}$ we obtain:
\begin{displaymath}
\left[D_\mathbf{u}\mathbf{G}\right]_{1,1} =
\frac{n}{N} - 2\frac{2n}{N^2} - \frac{1}{N^2}\left(\frac{1}{2}(N-2)(N-1)\right) = 
\frac{-N^2-2nN+N-2}{N^2} = -\frac{2n}{N},
\end{displaymath}
when $2n+1$ is substituted for $N$ in the numerator. Finally, consider $k \neq 1$ and $k \neq 2j+1$ for $j \in \{1,\dots,n\}$. Differentiating with respect to $u_k$ we have:
\begin{displaymath}
\frac{\partial\mathbf{G}}{\partial u_k} = -u_1\left(\sum_{j \neq k}u_j\right).
\end{displaymath}
Evaluating at $u_1 = u_2 = \cdots = u_n = \tfrac{1}{N}$ we obtain:
\begin{displaymath}
\left[D_\mathbf{u}\mathbf{G}\right]_{1,k} = -\frac{1}{N^2}(N-1) = -\frac{2n}{N}.
\end{displaymath}
The result now follows from the fact that $\mathbf{M}_N$ is a circulant matrix.
\end{proof}

\begin{proof}[Proof of Proposition \ref{prop:DotGammaSign}] Consider any vector $\mathbf{x} \in \mathbb{R}^N$. Then:
\begin{multline*}
\mathbf{x}^T\mathbf{H}\mathbf{x} = \frac{1}{N^2}\mathbf{x}^T\left(N\left(\mathbf{M}_N-\mathbf{1}_N\right) + \mathbf{1}_N\right)\mathbf{x}^T = \\
\frac{1}{N^2}\left(N\left(\sum_{i=1}^N\sum_{j>i}x_i x_j - \sum_{i=1}^N x_i^2 -2\sum_{i=1}^N\sum_{j > i}x_ix_j \right) +
\sum_{i=1}^N x_i^2 + 2\sum_{i=1}^N\sum_{j > i}x_ix_j\right) = \\
-\frac{N-1}{N^2}\sum_{i=1}^N x_i^2 - \frac{N-2}{N^2} \sum_{i=1}^N\sum_{j > i}x_ix_j.
\end{multline*}
Let $\mathbf{S} \in \mathbb{R}^{N \times N}$ be the upper-triangular matrix defined as:
\begin{displaymath}
\mathbf{S}_{ij} = \begin{cases}
-\frac{N-1}{N^2} & \text{if $i = j$},\\
-\frac{N-2}{N^2} & \text{otherwise}.
\end{cases}
\end{displaymath}
Then:
\begin{displaymath}
\mathbf{x}^T\mathbf{H}\mathbf{x} = \mathbf{x}^T\mathbf{S}\mathbf{x} =
-\frac{N-1}{N^2}\sum_{i=1}^N x_i^2 - \frac{N-2}{N^2} \sum_{i=1}^N\sum_{j > i}x_ix_j.
\end{displaymath}
The leading principal minors of $\mathbf{S}$ alternate in sign (the diagonal is entirely negative) and thus by Sylvester's criterion, $\mathbf{S}$ is negative definite. It follows at once that $\mathbf{x}^T\mathbf{H}\mathbf{x} < 0$ for all $\mathbf{x} \neq \mathbf{0}$ and thus $\mathbf{H}$ is negative definite. The fact that $\dot{\gamma} \leq 0$ now follows from Lemma \ref{lem:MainControl1}.
\end{proof}

\begin{proof}[Proof of Lemma \ref{lem:5H}] From Lemma \ref{lem:DG} we have:
\begin{displaymath}
\mathbf{H} = \frac{1}{N^2}\left(N\left(\mathbf{M}_N - \mathbf{1}_N\right) + \mathbf{1}_N\right).
\end{displaymath}
Let $\mathbf{R} = \mathbf{M}_N - \mathbf{1}_N$. The following computations are straight forward:
\begin{align*}
\mathbf{H}^T\mathbf{H} &= \frac{1}{N^4}\left(N^2\mathbf{R}^T\mathbf{R} + N\mathbf{R}^T\mathbf{1}_N + N\mathbf{1}^T_N\mathbf{R} + \mathbf{1}_N^T\mathbf{1}_N \right),\\
\mathbf{H}\mathbf{H} &= \frac{1}{N^4}\left(N^2\mathbf{R}\mathbf{R} + N\mathbf{R}\mathbf{1}_N + N\mathbf{1}_N\mathbf{R} + \mathbf{1}_N\mathbf{1}_N \right).
\end{align*}
Note that:
\begin{displaymath}
\mathbf{1}^T_N\mathbf{1}_N = \mathbf{1}_N\mathbf{1}_N = N\mathbf{1}_N.
\end{displaymath}
We may also compute:
\begin{displaymath}
\mathbf{1}_N\mathbf{M}_N = \mathbf{1}^T_N\mathbf{M}_N = n\mathbf{1}_N,
\end{displaymath}
because $M_N$ contains $n$ unit entries in each column (row). Consequently:
\begin{displaymath}
\left(\mathbf{M}_N^T\mathbf{1}_N\right)^T = \mathbf{1}^T\mathbf{M}_N = n\mathbf{1}_N,
\end{displaymath}
and therefore:
\begin{displaymath}
\mathbf{M}_N^T\mathbf{1}_N = n\mathbf{1}_N^T=n\mathbf{1}_N.
\end{displaymath}
Using this information, we compute:
\begin{equation}
\mathbf{R}^T\mathbf{1}_N = \mathbf{1}_N^T\mathbf{R} = \mathbf{1}_N\mathbf{R} = \mathbf{R}\mathbf{1}_N = -(n+1)\mathbf{1}_N.
\label{eqn:R1}
\end{equation}
Thus:
\begin{multline*}
\mathbf{H}^T\mathbf{H} + \mathbf{H}\mathbf{H} =
\frac{1}{N^4}\left(
N^2\left(\mathbf{R}^T + \mathbf{R}\right)\mathbf{R} -4N(n+1)\mathbf{1}_N + 2N\mathbf{1}_N\right) = \\
\frac{1}{N^4}\left(N^2\left(\mathbf{R}^T + \mathbf{R}\right)\mathbf{R} - 2N\left(2(n+1)-1\right)\mathbf{1}_N
\right) = \\
\frac{1}{N^2}\left(N^2\left(\mathbf{R}^T + \mathbf{R}\right)\mathbf{R} -2N^2\mathbf{1}_N\right) =
\frac{1}{N^2}\left(\left(\mathbf{R}^T + \mathbf{R}\right)\mathbf{R} - 2\mathbf{1}_N\right).
\end{multline*}
Using the fact that $\mathbf{H} = (N\mathbf{R} + \mathbf{1}_N)/N^2$, we may write:
\begin{displaymath}
\mathbf{H}^T\mathbf{H} + \mathbf{H}\mathbf{H} + \mathbf{H} =
\frac{1}{N^2}\left( \left(\mathbf{R}^T + \mathbf{R} + N\mathbf{I}_N\right)\mathbf{R} - \mathbf{1}_N \right).
\end{displaymath}
The circulant structure of $\mathbf{M}$ implies the identity:
\begin{displaymath}
\mathbf{M}^T + \mathbf{M} = \mathbf{1}_N - \mathbf{I}_N.
\end{displaymath}
Therefore:
\begin{displaymath}
\mathbf{R}^T + \mathbf{R} = \mathbf{1}_N - \mathbf{I}_N - 2\mathbf{1}_N = -\mathbf{1}_N - \mathbf{I}_N.
\end{displaymath}
Thus, using Equation \ref{eqn:R1} and the fact that $N - 1 = 2n$:
\begin{multline*}
\mathbf{H}^T\mathbf{H} + \mathbf{H}\mathbf{H} + \mathbf{H} =
\frac{1}{N^2}\left(\left((N-1)\mathbf{I}_N - \mathbf{1}_N \right)\mathbf{R} - \mathbf{1}_N \right) = \\
\frac{1}{N^2}\left((N-1)(\mathbf{M}_N - \mathbf{1}_N) + (n+1)\mathbf{1}_N - \mathbf{1}_N \right) = \\
\frac{1}{N^2}\left(2n\mathbf{M}_N + -n\mathbf{1}_N\right) =
\frac{n}{N^2}\left(2\mathbf{M}_N - \mathbf{1}_N\right).
\end{multline*}
Recall from Corollary \ref{cor:uu} that:
\begin{displaymath}
\mathbf{x}^T\mathbf{M}_N\mathbf{x} = \sum_{i=1}^N\sum_{j > i}x_ix_j.
\end{displaymath}
Furthermore, it is straight forward to compute:
\begin{displaymath}
\mathbf{x}^T\mathbf{1}_N\mathbf{x} = \sum_{i=1}^N x_i^2 + 2\sum_{i=1}^N\sum_{j > i}x_ix_j.
\end{displaymath}
Therefore:
\begin{multline*}
\mathbf{x}^T\left( \mathbf{H}^T\mathbf{H} + \mathbf{H}\mathbf{H} + \mathbf{H} \right)\mathbf{x} =
\frac{n}{N^2}\mathbf{x}^T\left(2\mathbf{M}_N - \mathbf{1}_N\right)\mathbf{x} = \\
\frac{n}{N^2}\left(2\sum_{i=1}^N\sum_{j > i}x_ix_j - \sum_{i=1}^N x_i^2 -
2\sum_{i=1}^N\sum_{j > i}x_ix_j\right) = -\frac{n}{N^2}\left\lVert\mathbf{x}\right\rVert^2.
\end{multline*}
This completes the proof.
\end{proof}

\section{Examples with $N = 3, 5, 7,9$}\label{app:Examples}
We study the derived optimal controls for the case when $N = 3, 5, 7, 9$ in both the fully non-linear optimal control problem and the quasi-linearized optimal control problem. In particular we observe similar structure to the optimal controls in all cases. For these examples, we set $t_f = 6$, except in the last case where we extend it to show an example where the sufficient condition for optimality is not satisfied.

In Figure \ref{fig:3cyclic} we show the optimal control for both the non-linear and quasi-linearized optimal control problems. We set $r = 0.2$ and the initial state $\mathbf{u}_0 = (0.2333, 0.3333, 0.43333)$, which is close enough to the fixed point for the simpler quasi-linearized approximation to be valid. We also demonstrate the equivalence between the solution to the quasi-linearized Euler-Lagrange equations and the second order differential equation (Eq. \ref{eqn:2ODE-Full}). We also show the approximation to the quasi-linearized control function that arises as a solution to Equation \ref{eqn:2ODE}.
\begin{figure}[htbp]
\centering
\includegraphics[scale=0.24]{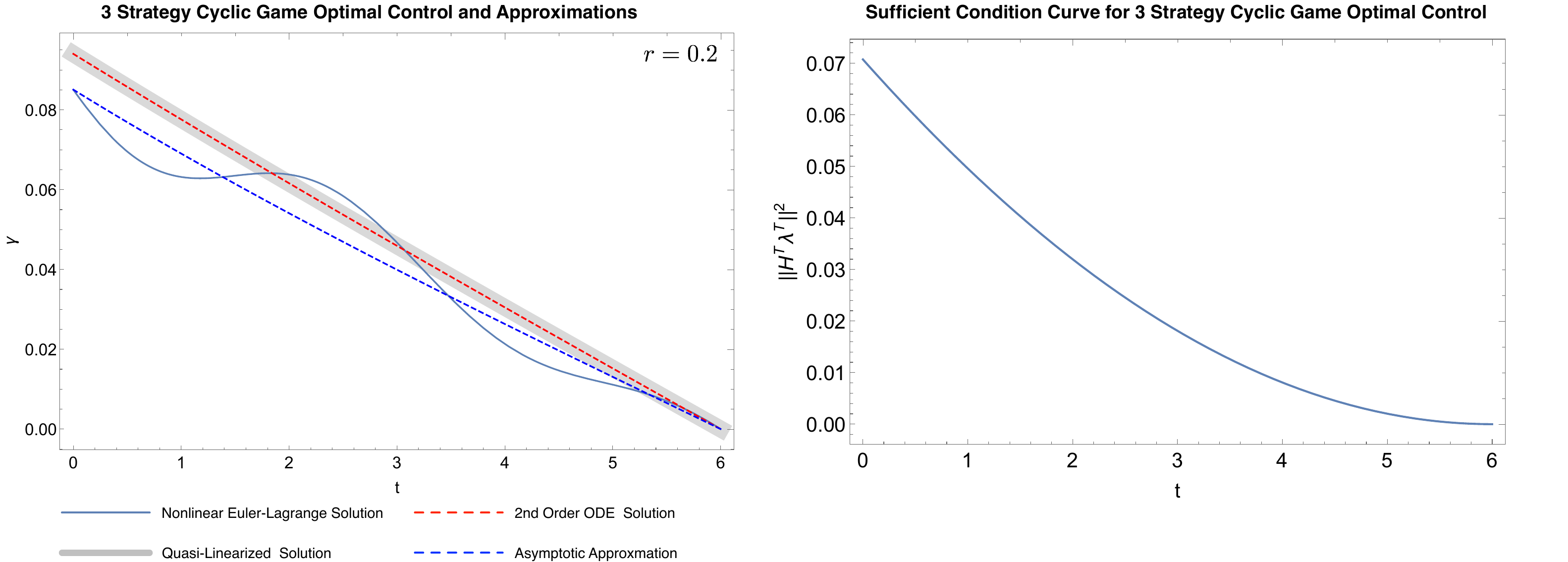}
\caption{The control function, its approximations and the $\norm{\mathbf{H}^T\bm{\lambda}^T}^2$, used in determining whether the solution to the necessary conditions are sufficient for an optimal control for the 3 strategy cyclic game (rock-paper-scissors).}
\label{fig:3cyclic}
\end{figure}
In the 3 strategy case, the condition for optimality is:
\begin{equation}
r > \norm{\mathbf{H}^T\bm{\lambda}^T}^2 = \frac{1}{9}\norm{\bm{\lambda}}^2.
\end{equation}
This is not generally true, but it is equivalent to the sufficient condition derived in \cite{GB17} for the diffeomorphic triple public goods game. Thus, Theorem \ref{thm:Sufficient} generalizes the results from \cite{GB17}.

\begin{figure}[htbp]
\centering
\includegraphics[scale=0.24]{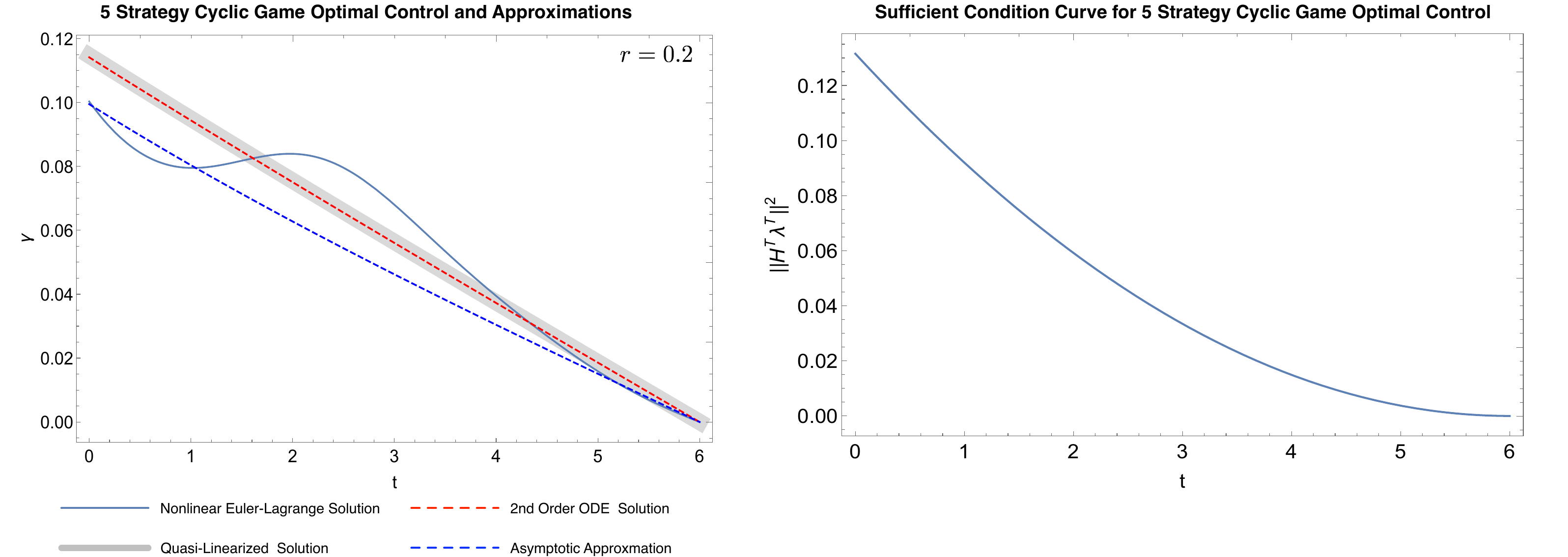}
\caption{The control function, its approximations and the $\norm{\mathbf{H}^T\bm{\lambda}^T}^2$, used in determining whether the solution to the necessary conditions are sufficient for an optimal control for the 5 strategy cyclic game (rock-paper-scissors-Spock-lizard).}
\label{fig:5cyclic}
\end{figure}

In Figure \ref{fig:5cyclic} we show the relevant control plots for the 5 strategy cyclic game (rock-paper-scissors-Spock-lizard\footnote{Developed by Sam Kass. See \url{http://www.samkass.com/theories/RPSSL.html} or Episode 8, Season 2 of \textit{The Big Bang Theory}. Note, to properly organize the moves to produce a  circulant matrix, the strategies should be ordered as rock-paper-scissors-Spock-lizard rather than rock-paper-scissors-lizard-Spock as they are on \textit{The Big Bang Theory}. Kass correctly organizes the strategies.}). We again use $r = 0.2$ and set $\mathbf{u}_0 = (0.1,0.3,0,0.1,0.3).$

An interesting feature of the 5 strategy cyclic game is that the limiting approximation (Equation \ref{eqn:2ODE}) does not perform as well as it did for the 3 strategy game. As we see in Figures \ref{fig:7cyclic} and \ref{fig:9cyclic}, the approximation does improve (as we expect) as $N$ increases. This anomalous behavior may be a function of numerical instability or a property of the 5 strategy cyclic game. We do note that because of the properties of Equations \ref{eqn:2ODE-Full} and \ref{eqn:2ODE} (i.e, branching solutions), we did observe some numerical instability when simulating these systems. We note that in the 5 strategy case, the sufficient condition on optimality is satisfied.

In Figures \ref{fig:7cyclic} and \ref{fig:9cyclic} we illustrate the optimal control for 7 and 9 strategy games. In these cases $r = 0.2$ again. To maintain feasibility of the starting solution, we set $\mathbf{u}_0$ by alternately adding and subtracting $0.05$ from the equilibrium, but kept the third strategy at proportion $1/N$ in both cases. Thus we assured $\mathbf{u}_0$ was in $\Delta_N$ in both cases.
\begin{figure}[htbp]
\centering
\includegraphics[scale=0.23]{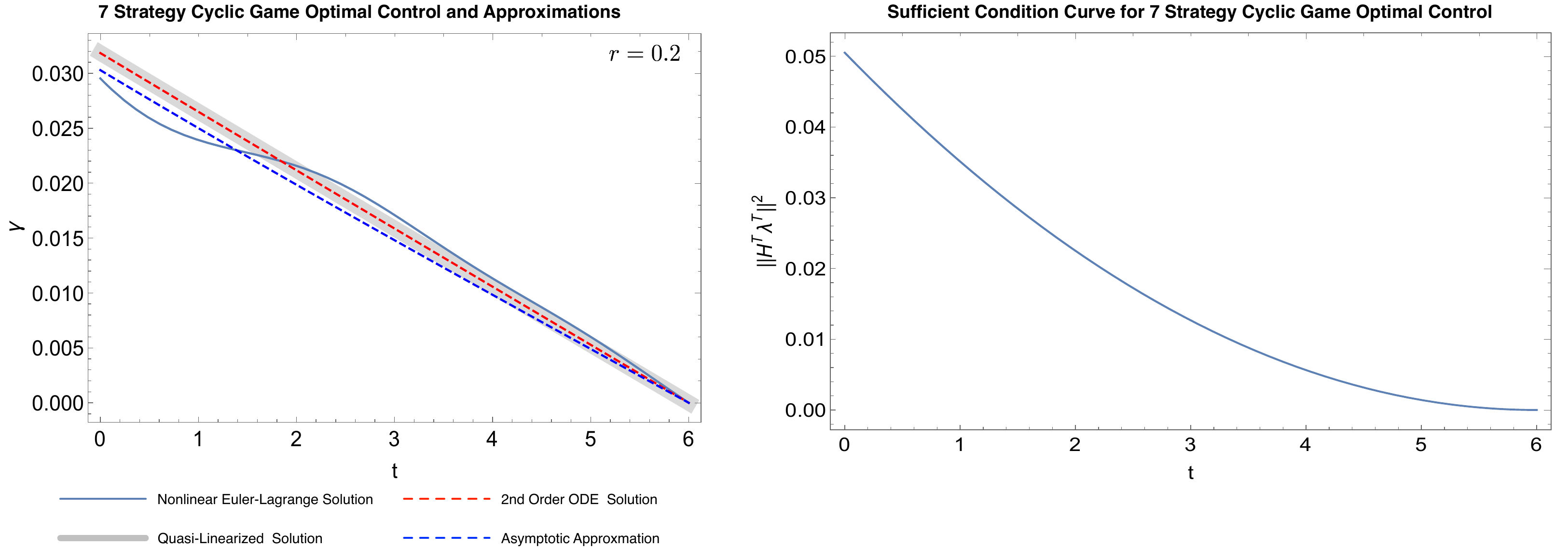}
\caption{The control function, its approximations and the $\norm{\mathbf{H}^T\bm{\lambda}^T}^2$, used in determining whether the solution to the necessary conditions are sufficient for an optimal control for the 7 strategy cyclic game.}
\label{fig:7cyclic}
\end{figure}
\begin{figure}[htbp]
\centering
\includegraphics[scale=0.23]{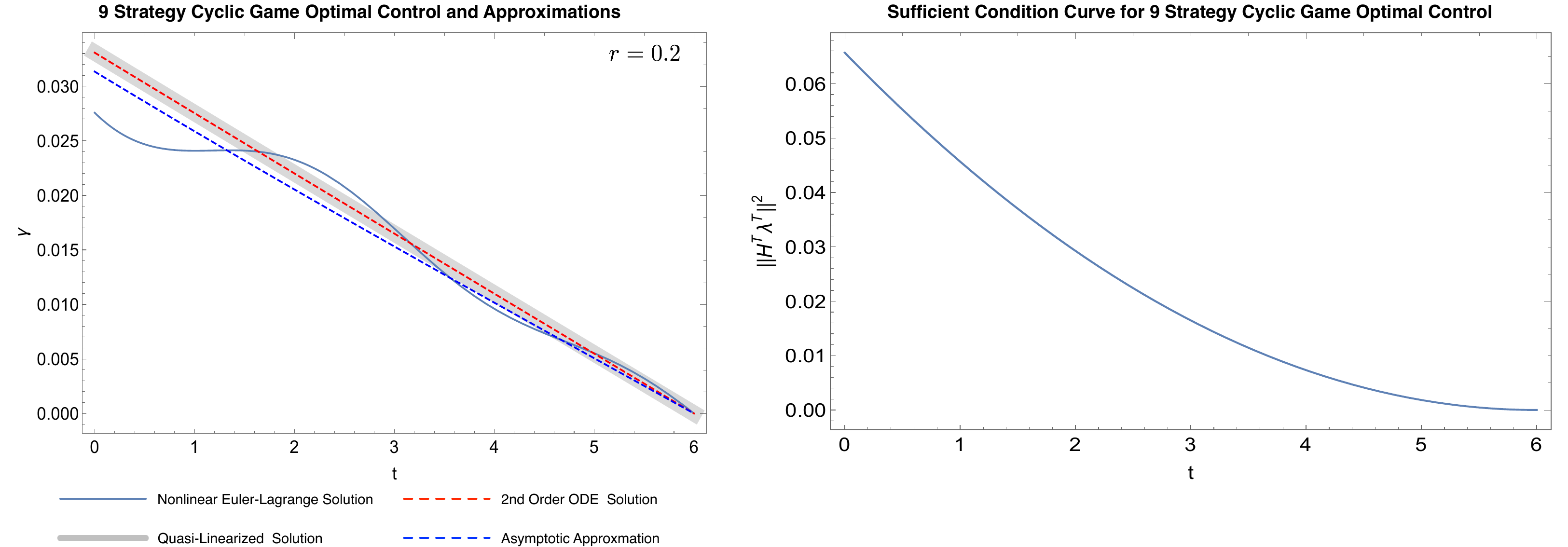}
\caption{The control function, its approximations and the $\norm{\mathbf{H}^T\bm{\lambda}^T}^2$, used in determining whether the solution to the necessary conditions are sufficient for an optimal control for the 9 strategy cyclic game.}
\label{fig:9cyclic}
\end{figure}
As we expect, as $N$ increases, the approximation in Equation \ref{eqn:2ODE} improves. It is also interesting to note that the general structure of the optimal control function is similar in all cases with the quasi-linearized control. It exhibits almost linear behavior, and the fully non-linear controller shows decreasing oscillation. That the optimal controller is a decreasing function is consistent with Proposition \ref{prop:DotGammaSign}.

We can analyze the control problem even when the starting state is not \textit{near} the fixed point, which yields a case where the sufficient condition for optimality fails to hold. We again consider the case when $N = 3$ and extend the time horizon of control to $t_f = 15$. We start at the point $\mathbf{u}_0 = (0.8,0.1,0.1)$, which is not near the equilibrium point $\mathbf{u}^* = \tfrac{1}{3}\mathbf{1}$, thus reducing the accuracy of the quasi-linearized approximation. The objective of this example is to study both the extended time control horizon as well as the control that results when the starting point is further from the equilibrium point.

We compute an optimal control using the fully non-linear Euler-Lagrange equations, the Euler-Lagrange equations for the quasi-linearized system, and the exact differential equation for $\gamma$ given in Lemma \ref{lem:MainControl1}. As demonstrated in Figure \ref{fig:Riccati}, the quasi-linearized control functions are identical to each other (as expected), and they are highly correlated to the control function for the fully non-linear system. The co-state, however does not satisfy the sufficient condition $r > \mathbf{H}^T\bm{\lambda}$. Here $r = 0.2$, as in the previous examples.
\begin{figure}[htbp]
\centering
\subfigure[Control and Co-State Plot]{\includegraphics[scale=0.24]{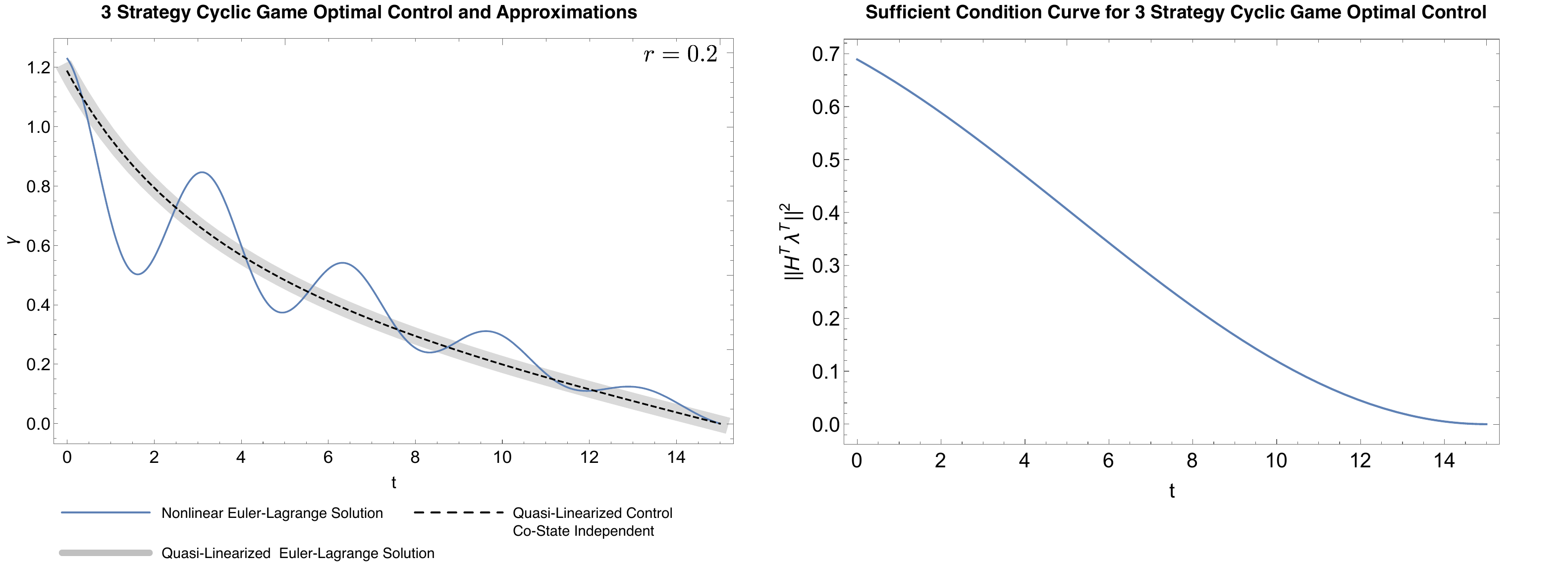}}
\subfigure[Matrix Riccati Equation Solution Curves]{\includegraphics[scale=0.35]{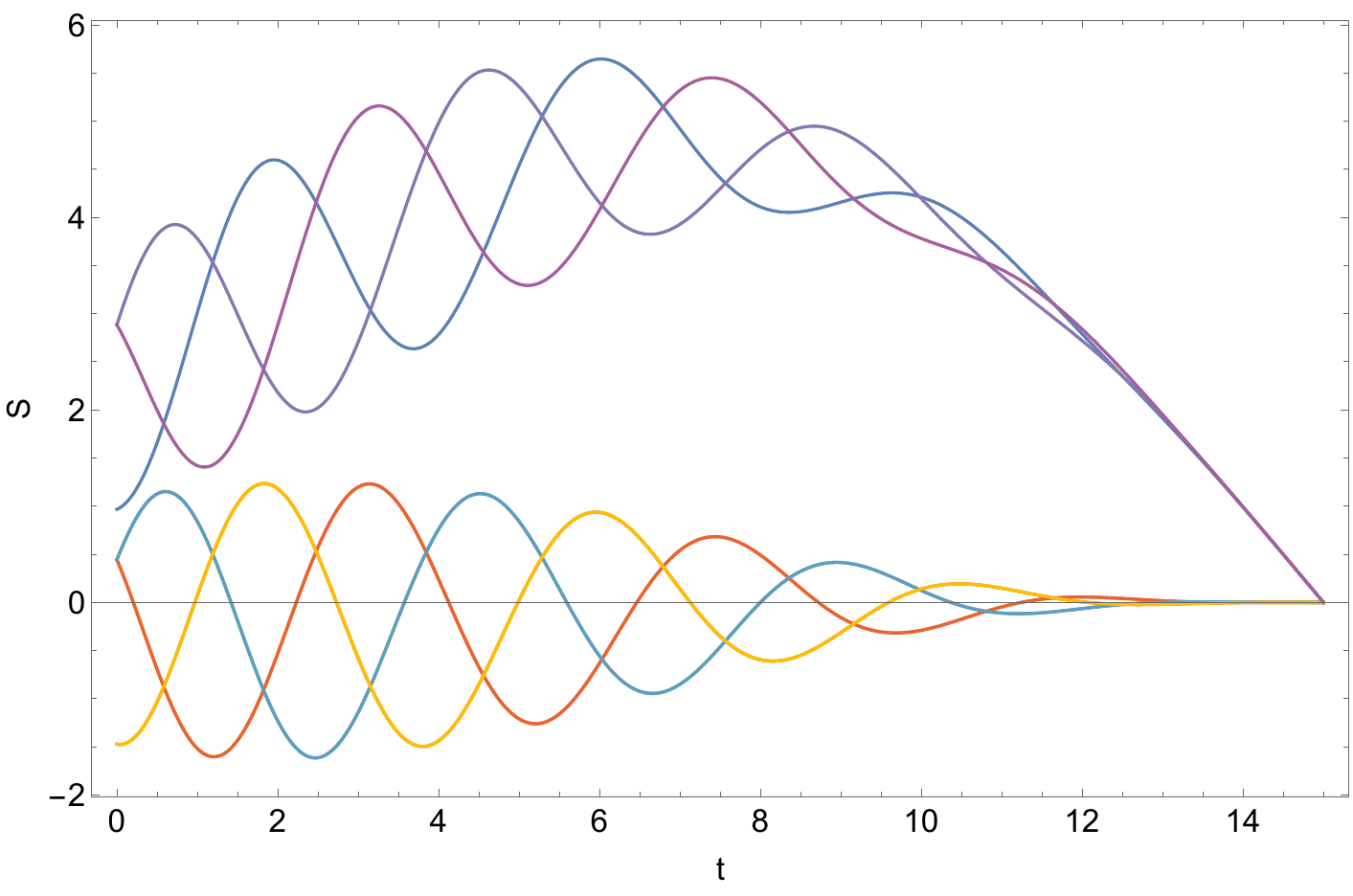}}
\caption{A solution that violates the sufficient condition for optimality $r > \mathbf{H}^T\bm{\lambda}$ but can be shown to be optimal by appealing to the matrix Riccatti equation, which has bounded solutions for $t \in [0,t_f]$. We compare the solution to the ordinary controller derived from the ordinary Euler-Lagrange equations and the controller that is directly computed using Lemma \ref{lem:MainControl1}. Note they are identical as expected.}
\label{fig:Riccati}
\end{figure}
We can still analyze this problem by solving the matrix Riccati equation as given in Theorem \ref{thm:main} to show that it is bounded, and thus the control identified for the quasi-linearized system is optimal. The matrix Riccati equation for this system is:
\begin{align*}
-\dot{\mathbf{S}} &= \mathbf{I}_3 + \left(\mathbf{J}+\gamma\mathbf{H}\right)^T\mathbf{S} +
\mathbf{S}\left(\mathbf{J} + \gamma\mathbf{H}\right) - \\
& \hspace*{5em}\frac{1}{r}\left(\bm{\lambda}^T\mathbf{H}+\mathbf{x}^T\mathbf{H}^T\mathbf{S}\right)^T\left(\bm{\lambda}^T\mathbf{H}+\mathbf{x}^T\mathbf{H}^T\mathbf{S}\right),\\
\mathbf{S}(t_f) &= \mathbf{0}.
\end{align*}
This system of equations contains nine equations because $\mathbf{S}$ is $3 \times 3$. As shown in Figure \ref{fig:Riccati}, the solution curves for this equation are all bounded on the time interval of consideration. Furthermore, since the state and co-state necessarily satisfy the Euler-Lagrange equations, and the Hamiltonian is convex in $\gamma$ and $\gamma$ (and therefore maximizes the Hamiltonian at all times (see Lemma \ref{lem:NecessaryControl})), the control function must be (locally) optimal.

Note that while the starting point in this example is not near the equilibrium point, the control computed for the quasi-linear approximation is still highly correlated to the control computed for the non-linear system. Thus, in this case quasi-linearization still provides a reasonable approximation to the optimal control in the non-linear system.

\bibliographystyle{unsrt}
\bibliography{mrabbrev,RPSControlBib,RPS-General}

\end{document}